\definecolor{linkred}{rgb}{0.6,0,0}
\newtheorem{thm}{Theorem}[section]
\newtheorem{cor}[thm]{Corollary}
\newtheorem{lem}[thm]{Lemma}
\newtheorem{prop}[thm]{Proposition}
\theoremstyle{definition}
\newtheorem{definition}[thm]{Definition}
\newtheorem{rem}[thm]{Remark}
\newtheorem{ex}[thm]{Example}
\newtheorem{notation}[thm]{Notation}
\newtheorem{alg}[thm]{Algorithm}
\numberwithin{equation}{section}
\newcounter{situationCounter}
\newtheoremstyle{situation}{}{}{\addtolength{\leftskip}{0em}\itshape}{}{\bfseries}{}{.5em}{\thmname{#1} {#2}}
\theoremstyle{situation}
\newtheorem{situation}[situationCounter]{Situation}
\newcommand{\CC}{{\mathbb{C}}}
\newcommand{\PP}{{\mathbb{P}}}
\newcommand{\QQ}{{\mathbb{Q}}}
\newcommand{\ZZ}{{\mathbb{Z}}}
\newcommand{\pt}{{\mathrm{pt}}}
\newcommand{\T}{{\mathrm{T}}}
\newcommand{\Tvir}{\T^\vir}
\newcommand{\N}{{\mathrm{N}}}
\newcommand{\td}{{\mathrm{td}}}
\newcommand{\psl}{[\hspace{-0.15em}[}
\newcommand{\psr}{]\hspace{-0.15em}]}
\newcommand{\vir}{{\mathrm{vir}}}
\newcommand{\Z}{{\mathsf{Z}}}
\newcommand{\Zhat}{{\widehat{\mathsf{Z}}}}
\newcommand{\torus}{{\widetilde{T}}}
\newcommand{\Mhat}{{\widehat{M}}}
\newcommand{\Xhat}{{\widehat{X}}}
\newcommand{\chat}{{\widehat{c}}}
\renewcommand{\Mc}{{M(c)}}
\newcommand{\Mhatc}{{\Mhat(\chat\hspace{0.1em})}}
\newcommand{\Mhatmc}[1]{{\Mhat^{#1}(\chat\hspace{0.1em})}}
\newcommand{\PPhat}{{\widehat{\mathbb{P}}}}
\newcommand{\eCH}{{\epsilon}}
\newcommand{\eLB}{{\bm{e}}}
\newcommand{\eT}{{\mathbf{e}}}
\newcommand{\tT}{{\mathbf{t}}}
\newcommand{\relhom}{R\mathcal{H}om_{\pi}}
\newcommand{\shom}{\mathcal{H}om}
\newcommand{\vdim}{\mathrm{vdim}}
\DeclareMathOperator{\Hom}{Hom}
\DeclareMathOperator{\Ext}{Ext}
\DeclareMathOperator{\Spec}{Spec}
\DeclareMathOperator{\ch}{ch}
\DeclareMathOperator{\Eu}{e}
\DeclareMathOperator*{\Res}{Res}
\DeclareMathOperator{\coeffs}{F} 
\DeclarePairedDelimiter\abs{\lvert}{\rvert}
\begin{document}

\title[Blowup Formula for Instanton Vafa--Witten Invariants on Surfaces]{The Blowup Formula for the Instanton Part of Vafa--Witten Invariants on Projective Surfaces}
\author{Nikolas Kuhn}
\author{Oliver Leigh}
\author{Yuuji Tanaka}
\email{kuhnn@maths.ox.ac.uk}
\email{oliverle@math.uio.no}
\email{y-tanaka@math.kyoto-u.ac.jp}

\address{Max Planck Institute for Mathematics and Mathematical Institute, University of Oxford}
\address{Department of Mathematics, Uppsala University and Department of Mathematics, University of Oslo}
\address{Department of Mathematics, Kyoto University}
\subjclass[2020]{14D20, 14D21, 14J60, 14J80, 14J81}
\date{May 2022}

\begin{abstract}
We prove a blow-up formula for the generating series of virtual $\chi_y$-genera for moduli spaces of sheaves on projective surfaces, which is related to a conjectured formula for topological $\chi_y$-genera of Göttsche. Our formula is a refinement of one by Vafa--Witten relating to S-duality. 

We prove the formula simultaneously in the setting of Gieseker stable sheaves on polarised surfaces and also in the setting of framed sheaves on $\PP^2$. The proof is based on the blow-up algorithm of Nakajima--Yoshioka for framed sheaves on $\PP^2$, which has recently been extend to the setting of  Gieseker $H$-stable sheaves on $H$-polarised surfaces by Kuhn--Tanaka. 
\end{abstract}

\maketitle


\section{Introduction}

Hilbert schemes of points on a projective surface have been extensively studied, and while the picture is not complete, their geometry is well understood. In particular, there are many deep results concerning their enumerative invariants (e.g. \cite{G_Betti},\cite{Ch_On},\cite{EGL},\cite{MOP}). For their higher rank analogues, the moduli spaces of (semi-)stable sheaves on a surface, the situation is more complicated. Results for topological invariants for moduli spaces in ranks $2$ and $3$ on some surfaces have been obtained e.g. in \cite{LQ_Blowup2}, \cite{Yo_Be}, \cite{MM_Int} and \cite{Ko_Euler}, but, in general, one does not expect any reasonable formulas for the topological invariants. 

More recently, there has been an interest in studying \emph{virtual} analogues of the topological invariants, which are defined using the perfect obstruction theory coming from the moduli problem. This interest is partially motivated by the $S$-duality conjecture \cite{VW_Strong} concerning Vafa--Witten invariants and by the papers \cite{TT_VW1}, \cite{TT_VW2}, where Tanaka--Thomas proposed a mathematical definition of these invariants as the sum of two parts -- one of which is given by the signed virtual Euler characteristics of moduli spaces of stable sheaves. The virtual invariants have the benefit of being invariant under deformations of the surface that preserve the polarization, and -- strikingly -- they are in many cases predicted to depend only on finitely many basic invariants of the underlying surface. For example, explicit formulas up to rank $5$ have been conjectured for the virtual Euler characteristics in \cite{GKL_Cosmic} (see also \cite{GK_Refined}, \cite{GK_Virtual}, \cite{GK_Rank2}, \cite{GKW_Verlinde}; and \cite{GK_Sheaves} for an excellent survey).

In this paper, we prove the first completely general result for virtual $\chi_y$-genus and Euler characteristic of the higher rank moduli spaces: Namely, we prove a blowup formula for the virtual $\chi_y$-genera of the moduli spaces, and thereby for virtual Euler characteristics. This is a first and important step towards establishing the existing conjectures, and provides further evidence for them. Similar formulas for the topological invariants had been obtained in \cite{LQ_Blowup2} for the virtual Hodge polynomial in rank $2$, and in \cite[Prop. 3.1]{G_Theta} for virtual Poincaré polynomials of arbitrary rank moduli spaces. Our result also confirms part of a conjecture of Göttsche \cite[Rem. 3.2]{G_Theta} (stated more generally for virtual Hodge polynomials) when the moduli spaces are unobstructed, so that their virtual and topological $\chi_y$-genera agree. 

In the course of our proof, we prove a blowup formula for an equivariant analogue of the $\chi_y$-genus defined on moduli spaces of framed sheaves on $\PP^2$ and on its blowup. These moduli spaces of framed sheaves and their invariants are of independent interest in physics through in Nekrasov's conjecture and related topics (see for example \cite{Nek_SW}, \cite{NY_ICOB1}, \cite{NO_Random}, \cite{BE_Whit}, \cite{GNY}, \cite{GL_Toric} and \cite{GNY_SW}).

For the rest of the paper, we fix a positive integer $r>0$, which will denote the rank of the sheaves being considered. As mentioned above, we will prove blow-up formulas for two different kinds of moduli spaces. These two situations are:

\begin{situation}\label{sit: H-stable}  \normalfont{\textbf{- Gieseker stable sheaves} \cite{Mo_Don,KT_Blowup}\textbf{.}} 
    In this situation, $X$ is a smooth projective (connected) surface with a fixed polarisation $H$ and $p:\Xhat \rightarrow X$ is the blow-up of $X$ at a chosen reduced point $\pt\in X$ with exceptional divisor $C$. Moreover, $L_X$ is a fixed line bundle on $X$ with $c_1:=c_1(L_X)$, such that the intersection number $(L_X,H)$ is coprime to $r$.
    
    We consider classes $c:= r+c_1+\ch_2 \in H^*(X,\QQ)$ for various $\ch_2\in H^4(X,\QQ)$. 
    On the blow-up $\Xhat$ we will write $\chat_1:= c_1 - k[C]$ for various $k\in \ZZ$ and consider classes $\chat:= r+\chat_1+\ch_2$ for various $\ch_2\in H^*(X,\QQ)$. 
    In this situation, we consider the following moduli stacks:
    \begin{enumerate}
        \item 
        $\Mc$, the moduli stack of oriented Gieseker $H$-stable sheaves on $X$ with Chern character $c$. Here an oriented sheaf is a pair $(E,\, \psi:\det E\rightarrow L_X)$ where $E$ is a Gieseker $H$-stable sheaf and $\psi$ is an isomorphism. 
        \item 
        $\Mhat(\chat\hspace{0.1em})$ the moduli stack of oriented $p^*H$ slope-stable sheaves on $\Xhat$ with Chern character $\chat$.\footnote{Here, slope stability for the nef line bundle $p^*H$ is the same as Gieseker stability for the polarization $p^*H-\varepsilon [C]$ for any small enough $\varepsilon\in\QQ_{>0}$ depending on $\chat$ (this follows for example from \cite[Prop. 4.3]{K_Perverse}). Since the choice of $\varepsilon$ may depend on $\ch_2$, working with slope stability is slightly more convenient when considering generating series.} In this case, we define an orientation to be an isomorphism $\det E\xrightarrow{\sim} p^*L_X\otimes\mathcal{O}_{\Xhat}(-kC)$ if $\chat_1 = p^*c_1-k[C]$.
    \end{enumerate}
\end{situation}

\begin{situation}\label{sit: framed main} \normalfont{\textbf{- Framed sheaves on $\PP^2$} \cite{NY_ICOB1,NY_ICOB2}\textbf{.}}
    In this situation, we let $X: = \PP^2$ and consider the blow-up $p:\Xhat = \PPhat^2 \rightarrow X$ at $\pt = [1:0:0]$ with exceptional divisor $C$. We set $L_X=\mathcal{O}_{\PP^2}$ and $c_1=0$.
    We consider classes on $X$ of the form $c:= r+0+\ch_2$ for various $\ch_2\in H^4(\PP^2,\QQ)$. 
    For classes on the blow-up we consider $\chat_1:= - k[C]$ for some $k\in \ZZ$ and $\chat:= r+\chat_1+\ch_2$ for various $\ch_2\in H^4(\PPhat^2,\QQ).$ 
    In this situation, we consider the following moduli spaces:
	\begin{enumerate}
		\item 
		$\Mc$, the moduli space of \textit{framed sheaves on $\PP^2$} with Chern character $c$. That is, pairs $(E,\varphi)$ where $E$ is a coherent sheaf on $X$ which is locally free in a neighbourhood of $\ell_\infty := \{[0:z_1:z_2]\}$ and where $\varphi$ is an isomorphism $E|_{\ell_\infty}\overset{\sim}{\rightarrow} \mathcal{O}_{\ell_{\infty}}^{\oplus r}$. 
		\item $\Mhatc$, the moduli space of \textit{framed sheaves on $\Xhat$} with Chern character $\chat$, where the framing is with respect to the preimage $p^{-1}\ell_{\infty}$ of $\ell_{\infty}$ under $p$, which we denote again by $\ell_{\infty}$.  
	\end{enumerate}
\end{situation}

\begin{rem}
    In Situation \ref{sit: H-stable}, we use moduli stacks of oriented sheaves instead of the usual moduli spaces, since the stacks always have universal sheaves, which simplifies some of the presentation. The moduli stacks are degree $1/r$ gerbes over the coarse moduli spaces and all the invariants that we consider differ only by a factor of $1/r$.  
\end{rem}

We will now address how one can define algebraic invariants in Situations \ref{sit: H-stable} and \ref{sit: framed main}. In Situation \ref{sit: H-stable}, this uses that the moduli stacks are proper and carry a perfect obstruction theory, while in Situation \ref{sit: framed main}, one uses the fact that the moduli spaces carry a natural torus action with isolated torus fixed points. 

In what follows below, we let $M$ denote either $M(c)$ or $\Mhatc$ in either situation, we let $\mathcal{E}$ denote the universal sheaf on $M\times X$ or $M\times \Xhat$ and we let $\pi$ denote the projection morphism $\pi:M\times X\to M$ or $\pi:M\times \Xhat\to M$. For $M=M(c)$, take $\chat_1:= c_1$.
\begin{itemize}
    \item[(a)] \emph{Situation A:} $M$ is a proper Deligne--Mumford stack with a canonical perfect obstruction theory of virtual dimension 
		$
		-2r\ch_2  + \chat_1^2 -(r^2-1)\chi(X,\mathcal{O}_X).
		$
		The $K$-theoretic class of the virtual tangent bundle is 
		\[ 
		\T_M^\vir = \mathcal{O}_{M}^{\oplus\chi(X,\mathcal{O}_X)}- R\pi_*R\shom(\mathcal{E},\mathcal{E}).
		\]
	\item[(b)] \emph{Situation B:} $M$ is representable by a smooth quasi-projective variety with dimension 
		$
		-2r\ch_2  + \chat_1^2. 
		$
		There is a canonical action of $\torus := (\CC^*)^{2+r}$ on $M$ whose fixed locus is a set of isolated points. This action extends in a canonical way to $M\times X$ (respecctively $ M\times \Xhat$), such that $\mathcal{E}$ has a canonical structure of equivariant sheaf.   
		The $\torus$-equivariant class of the tangent bundle in $K$-theory is 
		\[ 
		\T^{\torus}_M = - R\pi_*R\shom(\mathcal{E},\mathcal{E}(-\ell_{\infty})).
		\]
\end{itemize}

As references for these properties: For Situation \ref{sit: H-stable}, see either \cite[\S 5.6]{Mo_Don} or \cite[Cor. 3.26 \& Prop. 5.12]{KT_Blowup}  and for Situation \ref{sit: framed main} see \cite[\S 2 \& \S 3]{NY_ICOB1} and  \cite[\S 5]{NY_PCSOB2}.
Using these properties, we now define a notion of ``integration'' for Chow cohomology classes on moduli spaces in each situation. We will let $\coeffs$ denote a field of characteristic zero, and write $A^*$ for Chow cohomology rings with coefficients in $\coeffs$.
\begin{notation} \label{not: integral notation}
Let $M$ be a moduli space from either Situation \ref{sit: H-stable} or \ref{sit: framed main}. Define the following integral notation:
\begin{enumerate}
	\item[(a)] \textit{Situation \ref{sit: H-stable}:} Using the perfect obstruction theory, we obtain a virtual fundamental class $[M]^\vir$. Using this, we define for $\alpha \in A^*(M)$:
	\[
	\int_M\alpha := \int_{[M]^\vir} \alpha :=\deg\left(\alpha \cap [M]^{\vir}\right),
	\]
	where for a Chow class $\gamma\in A_*(M)$ we let $\deg\gamma$ be the number obtained by pushforward along the structure morphism $M\to \Spec \CC$.
	\item[(b)] \textit{Situation \ref{sit: framed main}:} In this situation the moduli space is non-proper, but has a proper $\torus$-fixed locus. We define integration via the Atiyah--Bott localisation formula. Hence for a $\torus$-equivariant class $\alpha \in A_{\torus}^*(M)$:
	\[
	\int_M\alpha := \sum_{x\in M^{\torus}} \frac{\alpha|_{x}}{\Eu\left(\T_M^{\torus}|_{x}\right)}.
	\]
\end{enumerate}
Moreover, in light of the unified notations of Situations \ref{sit: H-stable} and \ref{sit: framed main} we suppress  the equivariant notation when considering Situation \ref{sit: framed main} unless it is necessary. Hence, from now on, in Situation \ref{sit: framed main} we write  $A^*(M)$ instead of $A^*_{\torus}(M)$, $\T_M$ instead of $\T_M^{\torus}$ and always consider $\mathcal{E}$ as an equivariant sheaf. 
\end{notation}

\begin{rem} 
	If we consider the moduli spaces in Situation \ref{sit: framed main} as having the trivial perfect obstruction theory, then we have $\T^\vir_M = \T_M$ and it makes sense to use the word \textit{virtual} when discussing the enumerative invariants of both situations. 
\end{rem}

Using the above notation, in this article, we will consider invariants of the form 
\[
    \int_{M} \Theta(\T^{\vir}), 
\]
where $\Theta: K(M)\to A^*(M)$ is some characteristic class. In particular, we are interested in the case where $\Theta = \Theta_y$ is the multiplicative class defined on line bundles by $\Theta_y(L) = (1-y\ch(L)^{-1})\cdot \td(L)$, where $y$ is a formal variable. In this case, we set
\[
    \chi_{-y}^{\vir}(M):= \int_{M} \Theta_y(\T^{\vir})\in \QQ[y].
\]
This is the \emph{virtual $\chi_y$-genus} of $M$ as defined by Fantechi--Göttsche in \cite[\S 4]{FG_RR}. The main result of the article is the following theorem: 

\begin{thm}[Main Theorem] \label{thm: main theorem}
    Consider Situation \ref{sit: H-stable} or \ref{sit: framed main} such that $0\leq k <r$. Consider the generating series of virtual $\chi_y$-genera 
    \begin{align*}
        \Z(q,y) 
        &:= 
        \sum_{\substack{\ch_2}} \chi^\vir_{-y}\big(M(r+c_1+\ch_2)\big) \,q^{\mathrm{vdim}\,M(r+c_1+\ch_2)}, \hspace{1em}\mbox{and}
        \\
        \Zhat(q,y) 
        &:=
        \sum_{\substack{\ch_2 }} \chi^\vir_{-y}\Big(\Mhat(r+\chat_1+\ch_2)\Big) \,q^{\mathrm{vdim}\,\Mhat(\,r+\chat_1+\ch_2\,)}.
	\end{align*}
      Then, we have the identity 
	\[
        \Zhat  = \mathsf{Y}_{k}  \cdot \Z,
	\]
	where 
    \[
	   \mathsf{Y}_k = \mathsf{Y}_{k}(q,y)
        := \prod_{n>0} \big( 1- (q^{2}y)^{rn} \big)^{-r}
        \sum_{k_1+\cdots+k_r = k} (q^2y)^{\sum_{ i<j} (k_i-k_j)^2/2 } \, y^{\sum_{ i<j} (k_i-k_j)/2 }
	    .
	\]
\end{thm}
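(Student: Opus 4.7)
The plan is to exploit the blowup algorithm of Nakajima--Yoshioka \cite{NY_ICOB1,NY_ICOB2} and its recent extension by Kuhn--Tanaka \cite{KT_Blowup}, which connect $\Mc$ and $\Mhatc$ through a sequence of intermediate moduli stacks $\Mhatmc{m}$ indexed by $m\in\ZZ$. These interpolate between objects that, after a standard identification, compute $\Z$ at one extreme and $\Zhat$ at the other; consecutive members differ by a wall-crossing that is controlled by a master space equipped with an auxiliary $\CC^*$-action.

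First I would rewrite each virtual $\chi_y$-genus appearing in $\Z$ and $\Zhat$ as a (possibly equivariant) integral in the sense of Notation \ref{not: integral notation}, and then apply virtual torus localisation to the master space interpolating $\Mhatmc{m}$ and $\Mhatmc{m+1}$. This presents the difference of the two contributions as a sum of residues at the non-trivial $\CC^*$-fixed loci, which by the Nakajima--Yoshioka analysis are built from the exceptional curve $C$ and decompose geometrically as products of a copy of a smaller moduli stack (supported away from $C$) with a discrete/symmetric-product piece supported on $C$. Summing the wall-crossing contributions over all $m$ telescopes into the identity $\Zhat = \mathsf{Y}_k\cdot \Z$, with $\mathsf{Y}_k$ emerging as a universal factor that depends only on $r$ and $k$.

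The core computation is then the explicit evaluation of the local factor on $C\cong\PP^1$. The relevant fixed points are labelled by ordered $r$-tuples $(k_1,\dots,k_r)$ with $\sum k_i = k$, corresponding to splittings of the twist $-k[C]$ across the tautological rank-$r$ bundle restricted to $C$, decorated by a tuple of length-$n$ Hilbert-scheme type data on $C$ accounting for extra point insertions. Evaluating $\Theta_y$ on the virtual tangent bundle $-R\pi_*R\shom(\mathcal{E},\mathcal{E})$ at such a fixed point and invoking Serre duality on $\PP^1$ should produce the quadratic exponents $\sum_{i<j}(k_i-k_j)^2/2$ in $q^2 y$ and $\sum_{i<j}(k_i-k_j)/2$ in $y$, while the point-insertion datum generates the infinite product $\prod_{n>0}(1-(q^2y)^{rn})^{-r}$. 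Matching the virtual dimensions of $\Mhatmc{m}$ with the powers of $q$ in $\mathsf{Y}_k$ must be done carefully to ensure that the $q$-grading in $\Z$ and $\Zhat$ lines up.

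The main obstacle will be the bookkeeping of this local residue computation: the virtual tangent complex on the wall-crossing loci has many weight summands, and extending the $K$-theoretic argument of Nakajima--Yoshioka on $\PPhat^2$ to the more general setting of \cite{KT_Blowup} requires showing that the local contribution really is universal, i.e.\ depends only on the data near $C$ and not on the global geometry of $X$. Once this universality is established, the formula can be pinned down by matching with any convenient special case (for instance, the framed case on $\PP^2$, where both sides can be computed directly from the combinatorics of fixed points), treating Situations \ref{sit: H-stable} and \ref{sit: framed main} uniformly because the universal sheaves and the expressions for $\Tvir$ in both situations are formally parallel.
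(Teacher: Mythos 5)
Your overall architecture—use the Nakajima--Yoshioka blow-up algorithm to produce a universal relation between integrals on $\Mhatc$ and on the $M(c+n[\pt])$, then pin down the universal coefficients by computing in the framed case on $\PP^2$—is the same as the paper's. However, two essential steps are missing, and the first is a genuine logical gap. The wall-crossing/telescoping procedure does \emph{not} directly yield a relation of the form $\Zhat = \mathsf{Y}_k\cdot\Z$ with $\mathsf{Y}_k$ a scalar series. What it yields (the paper's Weak Structure Theorem, Theorem \ref{thm: omega_j structure theorem}) is $\int_{\Mhatc}\Theta_y(\Tvir)=\sum_n\int_{M(p_*c+n[\pt])}\Theta_y(\Tvir)\,\Omega_n(\mathcal{E})$, where each $\Omega_n(\mathcal{E})$ is a priori a power series in the tautological classes $c_i(\relhom(\mathcal{O}_{\pt},\mathcal{E}))$ and in the equivariant parameters. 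To convert this into a product of generating series one must prove that the $\Omega_n$ are in fact \emph{constant} in the tautological variables. This is Proposition \ref{prop: Gamma's are constant}, and it is not formal: the paper proves it by an inductive argument using the recursion \eqref{eq: Gamma recursive} on $M(r+0[\pt])=\pt$, together with a divisibility argument (Lemma \ref{lem : polydivisible}) showing that the rational function $Q/P$ extracted from the fixed-point sums is a polynomial of degree zero in the $d_i=\ch(\eT_i)-1$. Your proposal assumes multiplicativity of the blow-up relation where it actually needs to be established.

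The second gap is in the determination of $\mathsf{Y}_k$. Your proposed "local computation on $C\cong\PP^1$ via Serre duality", with the infinite product arising from a symmetric-product/Hilbert-scheme datum on $C$, does not reflect how the factor $\prod_{n>0}(1-(q^2y)^{rn})^{-r}$ actually arises, and there is no such local fixed-locus decomposition in the argument. In the paper one computes the \emph{full} equivariant generating series on both $\PP^2$ and $\PPhat^2$ as sums over $\widetilde T$-fixed points (Theorems \ref{thm: fixed point and tangent spaces} and \ref{thm: fixed point and tangent spaces blowup}), takes the ordered limits $e_1,\dots,e_r\to 0$ to factor each rank-$r$ series into $r$ (resp.\ $2r$) copies of a rank-one series $\mathsf{W}$ in shifted variables, and then the ratio $\mathsf{W}(t_1,t_2/t_1)\mathsf{W}(t_1/t_2,t_2)/\mathsf{W}(t_1,t_2)$ is evaluated in closed form using the $(q,t)$-Nekrasov--Okounkov product formula of Rains--Warnaar (Proposition \ref{prop: rank1blowup}). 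That external input is indispensable for obtaining the eta-type infinite product; your fallback of "computing both sides directly from the combinatorics of fixed points" leaves precisely this hardest evaluation unaddressed. The quadratic exponents $\sum_{i<j}(k_i-k_j)^2/2$ and the linear ones in $y$ do come from the lattice part $\bm{k}$ of the fixed-point data on $\PPhat^2$ (via the factors $\mathrm{L}_{a,b}$ and the dimension formula), which is consistent with your intuition, but the $\Lambda$-product requires the rank-one identity.
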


\begin{rem}
The factor $\mathsf{Y}_{k} $ appearing in Theorem \ref{thm: main theorem} is independent of all input data except $k$ and the rank $r$. This is the same  blow-up behaviour observed for the topological invariants studied in \cite{G_Betti},\cite{Ch_On},\cite{LQ_Blowup1,LQ_Blowup2} and \cite{G_Theta}. 
\end{rem}

\begin{rem}
The result in Theorem \ref{thm: main theorem} was originally conjectured in Situation \ref{sit: H-stable} for the topological $\chi_y$-genus by Göttsche in \cite[Rem. 3.2]{G_Theta} (stated more generally for virtual Hodge polynomials). The factor $\mathsf{Y}_{k}$ can be rewritten in Göttsche's notation as follows. Let $A = (a_{ij} )_{ij}$ be the $(r-1)\times (r-1)$-matrix with entries $a_{ij} = 1$ for $i\leq j$ and $a_{ij} = 0$ otherwise, and let $I$ be the column vector of length $r-1$ with all entries equal to one. Let $\eta$ denote the Dedekind eta function. Then we have
\[
    \mathsf{Y}_{k}
    =
    \frac{(q^{2r}y^r)^{r/24}}{\eta(q^{2r}y^r)^r} 
    \sum_{v\in \ZZ^{r-1} + \frac{k}{r}I} (q^{2r}y^r)^{v^t A v} \,y^{v^t A I}. 
\]
In the cases where the moduli problem is unobstructed (i.e. if every stable sheaf $E$ on $X$ and $\Xhat$ has $\Ext^2(E,E)=0$) topological and virtual $\chi_y$-genus coincide. Hence, in these cases, Theorem \ref{thm: main theorem} proves Göttsche's topological conjecture. 
\end{rem}

One can also consider virtual Euler characteristic and virtual holomorphic Euler characteristic. These are defined respectively as
\[
e^\vir(M) 
:=
\int_M \Eu(\T^\vir_M) 
\hspace{1em}\mbox{and}\hspace{1em}
\chi^\vir(M) 
:=
\int_M \td(\T^\vir_M) 
\]
which are the respective specialisations $\chi_{-1}^\vir$ and $\chi_0^\vir$ of $\chi_y^{\vir}$. Using these characterisations, Theorem \ref{thm: main theorem} has the following corollary. 

\begin{cor} \label{cor:main cor}
	In the setting of Theorem \ref{thm: main theorem}, using appropriately defined generating series, we have
    \[
        \Zhat_{e^\vir}  = \mathsf{Y}_{k,\,e^\vir}  \cdot \Z_{e^\vir} 
        \hspace{1cm}\mbox{and}\hspace{1cm}
        \Zhat_{\chi^\vir}  = \mathsf{Y}_{k,\,\chi^\vir}  \cdot \Z_{\chi^\vir},
	\]
	where 
	\[
        \mathsf{Y}_{k,\,e^\vir} = 
        \prod_{n>0} \big( 1- q^{2rn} \big)^{-r}
	    \sum_{\sum_{i=1}^r k_i = k} q^{\sum_{ i<j} (k_i-k_j)^2 } 
        \hspace{1.45em}\mbox{and}\hspace{1.45em}
        \mathsf{Y}_{k,\,\chi^\vir} =\begin{cases} 1, & \mbox{if } k=0;\\ 0, &\mbox{else.} 
        \end{cases}
	\]
\end{cor}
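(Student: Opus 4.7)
The plan is to deduce Corollary \ref{cor:main cor} from Theorem \ref{thm: main theorem} by specialising the formal variable $y$. The characteristic class $\Theta_y$ evaluated on a line bundle $L$ with Chern root $x$ equals $(1-y e^{-x}) \cdot \tfrac{x}{1-e^{-x}}$. Setting $y=1$ collapses this to $x$, so $\Theta_1$ is the Euler class and $\int_M \Theta_1(\T^\vir) = e^\vir(M)$; setting $y=0$ gives $\Theta_0(L) = x/(1-e^{-x}) = \td(L)$, so $\int_M \Theta_0(\T^\vir) = \chi^\vir(M)$. Since $\chi_{-y}^\vir(M) \in \QQ[y]$, both specialisations are well-defined term by term, and the generating series of Theorem \ref{thm: main theorem} become
\[
\Z(q,1) = \Z_{e^\vir}(q), \quad \Zhat(q,1) = \Zhat_{e^\vir}(q), \quad \Z(q,0) = \Z_{\chi^\vir}(q), \quad \Zhat(q,0) = \Zhat_{\chi^\vir}(q).
\]

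It then suffices to evaluate the factor $\mathsf{Y}_k(q,y)$ at $y=1$ and $y=0$. For $y=1$, the infinite product collapses to $\prod_{n>0}(1-q^{2rn})^{-r}$, and the summand indexed by $(k_1,\ldots,k_r)$ reduces to $q^{\sum_{i<j}(k_i-k_j)^2}$, recovering $\mathsf{Y}_{k,\,e^\vir}$. For $y=0$, the infinite product becomes $1$, and each summand, through the factors $y^{\sum_{i<j}(k_i-k_j)^2/2}$ and $y^{\sum_{i<j}(k_i-k_j)/2}$, vanishes unless $\sum_{i<j}(k_i-k_j)^2 = 0$. Since $(k_i-k_j)^2 \geq 0$, this forces $k_1 = \cdots = k_r$; combined with $\sum_i k_i = k$ and integrality of the $k_i$, it requires $r \mid k$. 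In the hypothesised range $0 \leq k < r$ this occurs only for $k=0$, where the sole surviving term contributes $1$, giving $\mathsf{Y}_{0,\,\chi^\vir} = 1$; otherwise every summand vanishes and $\mathsf{Y}_{k,\,\chi^\vir} = 0$.

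Each step above is a direct substitution, so there is no substantive obstacle; the entire corollary is a formal consequence of Theorem \ref{thm: main theorem} through these two specialisations. The only bookkeeping point worth noting is that the half-integer exponents of $y$ appearing in $\mathsf{Y}_k$ cause no trouble for either specialisation: at $y=1$ they evaluate to $1$, and at $y=0$ they vanish whenever the exponent is positive, which is exactly what is used above.
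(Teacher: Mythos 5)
The $e^{\vir}$ half of your argument is correct and is exactly the route the paper intends: $\Theta_1=\Eu$, $\Theta_0=\td$, and at $y=1$ the factor $\mathsf{Y}_k$ visibly becomes $\mathsf{Y}_{k,\,e^\vir}$. The problem is your evaluation of $\mathsf{Y}_k$ at $y=0$. You claim each summand vanishes unless $\sum_{i<j}(k_i-k_j)^2=0$, treating the factors $y^{\sum_{i<j}(k_i-k_j)^2/2}$ and $y^{\sum_{i<j}(k_i-k_j)/2}$ as if both exponents were nonnegative. But the second exponent can be \emph{negative} and partially cancel the first: the net power of $y$ in the summand indexed by $(k_1,\ldots,k_r)$ is $\sum_{i<j}\tbinom{k_i-k_j+1}{2}$, which vanishes precisely when $k_i-k_j\in\{0,-1\}$ for all $i<j$, not only when all $k_i$ are equal. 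For $0\le k<r$ there is always exactly one such tuple, namely $(0,\ldots,0,1,\ldots,1)$ with $k$ ones, and it contributes $q^{k(r-k)}$. Concretely, for $r=2$, $k=1$ the term $(k_1,k_2)=(0,1)$ gives $(q^2y)^{1/2}y^{-1/2}=q$, so the naive specialisation yields $\mathsf{Y}_1(q,0)=q$, not $0$. In general direct substitution gives $\mathsf{Y}_k(q,0)=q^{k(r-k)}$.

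This is not merely a slip in your bookkeeping: it shows that the corollary's value $\mathsf{Y}_{k,\,\chi^\vir}=0$ for $k\ne 0$ cannot be obtained by plugging $y=0$ into $\mathsf{Y}_k(q,y)$ and the generating series of Theorem \ref{thm: main theorem} as literally defined. (One can check against the main theorem itself: for $r=2$, $k=1$ the lowest blow-up moduli space is a $\PP^1$ with $\chi^{\vir}=1$, so $\Zhat(q,0)=q+\cdots=q\cdot\Z(q,0)\ne 0$.) To reach the stated form of $\mathsf{Y}_{k,\,\chi^\vir}$ one has to take seriously the phrase ``appropriately defined generating series'' --- i.e.\ renormalise the variable (for instance replacing $q$ by $qy^{1/2}$, which makes the net $y$-exponent $\sum_{i<j}\bigl((k_i-k_j)^2+\tfrac{1}{2}(k_i-k_j)\bigr)$ and does kill every tuple except the constant one) or invoke the Nakajima--Yoshioka normalisation from the result cited in the remark following the corollary. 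Your proof as written establishes a blow-up factor of $q^{k(r-k)}$ for the $\chi^\vir$-series in the paper's own variables, which is not the statement being proved.
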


\begin{rem}
    The case $k=0$ of the blow-up formula for virtual holomorphic Euler characteristics of Corollary \ref{cor:main cor} was previously proved by Nakajima--Yoshioka in \cite[Thm. 2.11, i)]{NY_PCSOB3} for Situation \ref{sit: framed main}. 
\end{rem}

\subsection*{Outline of the Article}
In order to prove Theorem \ref{thm: main theorem}, we use the blow-up algorithm of Nakajima--Yoshioka \cite{NY_PCSOB3} (extended to Situation \ref{sit: H-stable} by \cite{KT_Blowup}) to obtain a weak universal blowup formula for invariants of the form being studied. A key aspect of this formula is its universality, namely it shows that the desired blow-up formula exists and is independent of much of the input data. In particular, the formula will be the same in Situation \ref{sit: H-stable} and Situation \ref{sit: framed main}. We then analyse the equivariant geometry arising in Situation \ref{sit: framed main} to determine the exact form of the desired formula. This proceeds via a reduction to rank $1$, where we can use the $(q,t)$-Nekrasov--Okounkov formula \cite[Thm. 1.3]{Ra_Wa} to conclude.

The article is structured as follows:
\begin{itemize}
    \item \textit{Section \ref{sec: NY blow-up}}: We recall the blow-up algorithm of Nakajima--Yoshioka \cite{NY_PCSOB3}  and show how it is extended to both situations by \cite{KT_Blowup}. 
    \item \textit{Section \ref{sec: weak structure thm}}: We state and  prove a weak universal blowup formula for invariants defined by classes which are multiplicative in the tangent bundle. 
    \item \textit{Section \ref{sec: equivariant proof}}: We prove the \hyperref[thm: main theorem]{Main Theorem} by determining the coefficients from the weak universal blowup formula. We do this by examining the equivariant structure arising in Situation \ref{sit: framed main}. 
\end{itemize}

\hypertarget{sec: basic notation}{}

\subsection*{Basic Notation}
We fix a positive integer $r>0$ and will always assume the notation described in Situations \ref{sit: H-stable} and \ref{sit: framed main}. In particular, we consider classes of the form $c:= r+c_1+\ch_2 \in A^*(X)$ and $\chat:= r+\chat_1+\ch_2 \in A^*(\Xhat)$ described there. We will also use the integral notation described in Notation \ref{not: integral notation}. 

Other basic notation includes:
\begin{itemize}
    \item $C_{m}:=\mathcal{O}_C(-m-1):=\mathcal{O}_C\otimes \mathcal{O}_{\Xhat}((m+1)\,C)$ denotes the unique degree $-m-1$ line bundle on the exceptional divisor $C$. In Situation \ref{sit: framed main}, we endow $C_m$ with the equivariant structure obtained from the inclusion into $\mathcal{O}_C$ if $m\geq -1$ (resp. the dual one for $m<-1$).
    \item $\eCH_m:=\ch(j_*C_m) = [C]-(m+1/2)[\pt]$ is the Chern character of $C_m$ taken as a coherent sheaf on $\Xhat$  via the inclusion $j:C\hookrightarrow \Xhat.$
    \item For a moduli space $M$ of sheaves on $X$ (or $\Xhat$) denote the universal sheaf on $M\times X$ (or $M\times \Xhat$) by $\mathcal{E}$ and let $\pi:M\times X\rightarrow M$ (resp. $\pi:M\times \Xhat\to M$) be the projection. We will denote $\Tvir_M$ simply by $\Tvir$.
    \item $\relhom(\mathcal{F},\mathcal{G}) := R\pi_* R\shom(\mathcal{F},\mathcal{G})$. 
    \item For a $\CC^*$-representation $\hbar$ we write $\eLB^\hbar$ for the associated equivariant line bundle.
    \item $\coeffs$ denotes the coefficient field for Chow cohomology groups.
    \item In Situation \ref{sit: framed main}, we take $\torus \coloneq (\CC^*)^{2+r}$. The $\torus$-action on $M$ is explained in Definition \ref{def: torus action}.
    \item $\tT_1,\tT_2, \eT_1,\ldots, \eT_r$ are the fundamental one dimensional $\torus$-representations. See Notation \ref{not: T representations} for more details.
    \item For a coherent sheaf $\mathcal{F}$ define the notation in Situation \ref{sit: H-stable}: 
    \[
        \Tvir_{\mathcal{F}}:= - \mathcal{O}_{M}^{\oplus\chi(X,\mathcal{O}_X)}- R\pi_*R\shom(\mathcal{F},\mathcal{F}),
    \]
    and in Situation \ref{sit: framed main}:
    \[
    \Tvir_{\mathcal{F}}:= - R\pi_*R\shom(\mathcal{F},\mathcal{F}(-\ell_{\infty})).
    \]
\end{itemize}

\subsection*{Acknowledgements}
The authors would like to thank Hiraku Nakajima and Martin Raum for useful conversations which contributed to this article. 

This project was started while the first and second-named authors were Junior Fellows in the program \textit{Moduli and Algebraic Cycles}; taking place at Institut Mittag-Leffler between the 30th of August 2021 and the 10th of December 2021. They would like to thank the organizers John Christian Ottem, Dan Petersen, David Rydh for creating a wonderful environment during the workshop and for helpful mathematical conversations. The third-named author was partially supported by the JSPS Grant-in-Aid for Scientific Research numbers JP16K05125, 16H06337, 20K03582, 20K03609, 21H04994 and 21K03246. He is grateful to Hiraku Nakajima for inviting him to Kavli IPMU in Autumn 2020 and invaluable discussions during the visit.

\section{Nakajima--Yoshioka blow-up algorithm} \label{sec: NY blow-up}

In \cite{NY_PCSOB3}, Nakajima and Yoshioka introduced an algorithm to express certain tautological integrals on $\Mhatc$ in terms of similar integrals on $M(c+n[\pt])$ for various $n$, which applies in Situation \ref{sit: framed main}. The results necessary to apply the algorithm in Situation \ref{sit: H-stable} were established in \cite{KT_Blowup}. In this section we will review the Nakajima--Yoshioka blow-up algorithm in both situations simultaneously and draw some conclusions. First, we recall the concept of $m$-stable sheaves.

\begin{definition}[\textit{$m$-stable sheaves}]
    \cite[Lem. 3.29]{KT_Blowup} \& \cite[\S 1.1]{NY_PCSOB3}
    Let $m\in \ZZ$. We denote by $\Mhatmc{m}$ the moduli stack of $m$-stable sheaves which parameterises: 
	\begin{enumerate}
    	\item 
    	\textit{In Situation \ref{sit: H-stable}:}  Sheaves $E$ on $\Xhat$ with Chern character $\chat$ such that
		the torsion-free quotient of the coherent sheaf $p_*E$ is $H$-stable on $X$, and such that $\Hom(\mathcal{O}_C(-m),E)=0$ and $\Hom(E,\mathcal{O}_C(-m-1))=0$.
		\item 
		\textit{In Situation \ref{sit: framed main}:}
		Framed sheaves $(E,\Phi)$ on $\PPhat^2$ such that $\ch(E)=\chat$ and $E$ is
		torsion-free away from the exceptional divisor $C$, and such that $\Hom(\mathcal{O}_C(-m),E)=0$ and $\Hom(E,\mathcal{O}_C(-m-1))=0$. 
	\end{enumerate}
\end{definition}

\begin{rem}\label{rem: extending integral to m-stable}
    The discussion preceding Notation \ref{not: integral notation} also holds for $M = \Mhatmc{m}$: In Situation \ref{sit: H-stable}, they are proper Deligne--Mumford stacks with a canonical perfect obstruction theory. In Situation \ref{sit: framed main}, they are smooth, quasi-projective varieties and carry a canonical $\torus$-action with isolated fixed points. In either case, the (virtual) dimension is the same as the one of $\Mhatc$.  Hence, we can extend the integral notation from Notation \ref{not: integral notation}  to  $M = \Mhatmc{m}$. 
\end{rem}

In order to simplify the exposition, we will only state the results of this section for spaces of $m$-stable sheaves. This is justified by the following result. 

\begin{prop} \label{prop: stable as m-stable} 
    Suppose we are in either Situation \ref{sit: H-stable} or \ref{sit: framed main}.
	\begin{enumerate}[label=(\roman*)]
		\item \cite[Prop. 3.25 \& Cor. 3.27]{KT_Blowup} \& \cite[Prop. 3.3]{NY_PCSOB2}\label{mstabi}
		If $\chat$ is of the form $p^*c$ then there is a natural isomorphism $p_*(-):\Mhatmc{0} \overset{\sim}{\rightarrow} \Mc$ whose inverse is $p^*(-):\Mc \overset{\sim}{\rightarrow}  \Mhatmc{0}$.
		\item \cite[Prop. 3.37]{NY_PCSOB2} \& \cite[Prop. 7.1]{NY_PCSOB1}\label{mstabii} For each $c$ there is an $m'\geq  0$ such that $\Mhatmc{m}=\Mhatc$ for all $m\geq m'$. 
		\item In Situation \ref{sit: H-stable}, the isomorphisms in \ref{mstabi} and \ref{mstabii} preserve the perfect obstruction theories. In Situation \ref{sit: framed main}, the isomorphisms are compatible with the $\torus$-equivariant structure.  
	\end{enumerate}
\end{prop}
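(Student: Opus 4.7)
Parts (i) and (ii) are direct citations of results from \cite{KT_Blowup} (for Situation \ref{sit: H-stable}) and \cite{NY_PCSOB1, NY_PCSOB2} (for Situation \ref{sit: framed main}); the plan is to invoke these without re-proving them. To set up the bijection in (i), I would note that for $F$ Gieseker $H$-stable the vanishing $\Hom(\mathcal{O}_C, p^*F) = 0$ is automatic by torsion-freeness of $p^*F$, and $\Hom(p^*F, \mathcal{O}_C(-1)) = 0$ follows by adjunction together with $Rp_* \mathcal{O}_C(-1) = 0$. The converse direction uses that $0$-stability is exactly the condition forcing the adjunction $p^*p_* E \to E$ to be an isomorphism. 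For (ii), a boundedness argument shows that for $m$ sufficiently large (depending on $\chat$) the conditions $\Hom(\mathcal{O}_C(-m), E) = 0$ and $\Hom(E, \mathcal{O}_C(-m-1)) = 0$ become equivalent to the stability conditions defining $\Mhatc$.

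The substantive verification is part (iii). In Situation \ref{sit: H-stable}, my plan is to identify the universal sheaves on the two sides via $\widehat{\mathcal{E}} = p^*\mathcal{E}$ and then use the projection formula together with $Rp_*\mathcal{O}_{\Xhat} = \mathcal{O}_X$ to obtain
\[
\relhom(p^*\mathcal{E}, p^*\mathcal{E}) \;\cong\; \relhom(\mathcal{E}, Rp_*\, p^*\mathcal{E}) \;\cong\; \relhom(\mathcal{E}, \mathcal{E}).
\]
Since $\chi(\Xhat, \mathcal{O}_{\Xhat}) = \chi(X, \mathcal{O}_X)$, the trivial summand in $\Tvir$ also matches, so the virtual tangent complexes agree and the perfect obstruction theories are identified. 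For the open inclusion $\Mhatmc{m} \hookrightarrow \Mhatc$ at $m \gg 0$ in (ii), the POTs match tautologically since both are induced from the same universal family restricted to an open substack. In Situation \ref{sit: framed main}, the $\torus$-action on $\PP^2$ fixes $\pt$ and lifts canonically to $\PPhat^2$ preserving both $C$ and $\ell_\infty$; hence $p$ is $\torus$-equivariant, the sheaves $\mathcal{O}_C(-m)$ acquire canonical equivariant structures, and the $m$-stability conditions cut out $\torus$-invariant loci, so the isomorphisms of (i) and (ii) are automatically $\torus$-equivariant.

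The step I expect to require the most care is the POT compatibility in (iii) for Situation \ref{sit: H-stable}: one must check that the identification $\widehat{\mathcal{E}} = p^*\mathcal{E}$ is compatible with the chosen orientations. Since (i) only concerns $\chat = p^*c$, this corresponds to $k = 0$, and the orientation $\det \widehat{\mathcal{E}} \xrightarrow{\sim} p^*L_X$ matches the pullback of $\det \mathcal{E} \xrightarrow{\sim} L_X$ via commutativity of $p^*$ with determinants, so no genuine obstruction arises beyond bookkeeping through the definitions in \cite{KT_Blowup}.
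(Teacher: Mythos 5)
Your proposal is correct and matches the paper's treatment: the paper gives no independent proof of this proposition, relying entirely on the cited results of Kuhn--Tanaka and Nakajima--Yoshioka for (i) and (ii), with (iii) left as the routine verification you sketch (identification of universal sheaves under $p^*$, the projection formula with $Rp_*\mathcal{O}_{\Xhat}=\mathcal{O}_X$, and equivariance of the lifted torus action). Your additional details -- the adjunction argument for $0$-stability and the orientation bookkeeping for $k=0$ -- are consistent with the arguments in the cited references.
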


The Nakajima--Yoshioka blow-up algorithm will be applied to integrals with the following type of integrand classes. 

\begin{notation} \label{not: phi notation}
    Suppose we are in either Situation \ref{sit: H-stable} or \ref{sit: framed main} and let $M$ be $\Mhatmc{m}$ with $\pi:\Xhat \times M \rightarrow M$ as the projection.
    We use the symbol $\Phi$ to denote a rule that associates a class $\Phi(\mathcal{F})\in A^*(M)$ to each coherent sheaf $\mathcal{F}$ on $\Xhat\times M$. 
    More precisely, we require that $\Phi(\mathcal{F})$ is given as a power series in classes of the form $\pi_{*}(c_{i_1}(\mathcal{F})\cdots c_{i_N}(\mathcal{F})\cdot\alpha)$ for some list of integers $(i_1,\ldots,i_N)$ and some Chow class $\alpha\in A^*(\Xhat)$.
    
    We also use this notation in the more general setting where $M'$ is a smooth projective scheme over $M$ that either carries an obstruction theory compatible with the one on $M$ (in Situation \ref{sit: H-stable}) or a $\torus$-action for which the map $M' \to M$ is equivariant (in Situation \ref{sit: framed main}).
\end{notation}

 \begin{ex} \label{ex: RHom}
    In both Situation \ref{sit: H-stable} and \ref{sit: framed main}, any polynomial in Chern classes of  $\Tvir$,
    $\relhom(\mathcal{E},C_m)$ or 
    $\relhom(C_m,\mathcal{E})$ will satisfy the conditions of Notation \ref{not: phi notation} for $\Phi(\mathcal{E})$. This follows from the virtual and equivariant versions of the Grothendieck-Riemann-Roch Theorem, and the characterisation of the respective tangent bundles given by Remark \ref{rem: extending integral to m-stable} and the discussion preceding Notation \ref{not: integral notation}.
 \end{ex}

In what follows, it will be useful to better understand the $K$-theory classes mentioned in Example \ref{ex: RHom}. 

\begin{lem} \label{lem: RHom properties}
    Suppose we are in Situation \ref{sit: H-stable} or \ref{sit: framed main} and consider the moduli space $M=\Mhatmc{m_0}$ with universal sheaf $\mathcal{E}$ and projection $\pi:\Xhat\times M\to M$. Then we have:
    \begin{enumerate}[label = (\roman*)]
        \item\label{RHomprop1} For any $m\in \ZZ$, the respective $K$-theoretic ranks of $\relhom(\mathcal{E},C_m)[1]$ and 
        $\relhom(C_m,\mathcal{E})[1]$ are given by
        \[
            rm+(\chat_1, [C]) 
            \hspace{1em}\mbox{and}\hspace{1em}
             r(m+1)+(\chat_1, [C] ).
        \]
        \item \label{RHomprop2} For $m=m_0$, we have the vector bundles:\vspace{0.3em}
        
        $\relhom(\mathcal{E},C_m)[1] = \Ext^1_{\pi}(\mathcal{E},C_m)$ and \vspace{0.3em}
        
        $\relhom(C_{m-1},\mathcal{E})[1]=\Ext^1_{\pi}(C_{m-1},\mathcal{E}).$\vspace{0.3em}
        \item \label{RHomprop3}
        Suppose that $\chat$ satisfies $(\chat_1,[C]) = 0$, and consider the space $\Mhatmc{0}$. Then for $m\in \ZZ$  we have the following  identities of $K$-theory classes:
    	\begin{align*}
    	    &\relhom(C_{m},\mathcal{E}) 
    	    =
    	    -\tT_1\tT_2R\Gamma(C_{m+1})^{\vee}\otimes\,     \relhom(C_{0},\mathcal{E}), \mbox{ and }
    	    \\
    	    &\relhom(\mathcal{E}, C_{m}) 
    	    =
    		-R\Gamma(C_m)\, \otimes \, \relhom(C_{0},\mathcal{E})^\vee \,.
    	\end{align*}
       In Situation \ref{sit: framed main} this uses the equivariant structure defined in the \hyperlink{sec: basic notation}{Basic} \hyperlink{sec: basic notation}{Notation}, and the $R\Gamma(C_m)$ are $\torus$-representations explicitly given by \eqref{eq: pushforward formula}.
    	In Situation \ref{sit: H-stable}, the identity holds if one replaces both $\tT_1$ and $\tT_2$ with the trivial line bundle.
    	
    	\item \label{RHomprop4} Let $m_1, m_2$ be integers. The object $R\Hom_{\Xhat}(C_{m_1},C_{m_2})$ has $K$-theoretic rank equal to $-1$. In Situation \ref{sit: framed main}, the $K$-theory class of $R\Hom_{\Xhat}(C_{m_1},C_{m_2})$ equals a sum of terms of the form $\pm \tT_1^i\tT_2^j$. 
    \end{enumerate}
\end{lem}
\begin{proof}
    Point \ref{RHomprop1} follows from a Grothendieck--Riemann--Roch computation. Point \ref{RHomprop2} follows from the definition of $m$-stability as explained prior to Lemma 3.22 in \cite[p. 79]{NY_PCSOB2}. For point \ref{RHomprop3}, using Bott's formula for pushforwards in equivariant $K$-theory, we have for any $E\in K^{T^{2}}(\PPhat^2)$:
    \[R\Gamma(E) = \sum_F \frac{\iota_F^*(E)}{\Lambda_{-1} \T^*_{F}\PPhat^2}, \]
    where the sum ranges over the torus fixed points of $\PPhat^2$.
    Using this, one can compute
    \begin{equation}\label{eq: pushforward formula}
    \begin{aligned}R\Gamma(\mathcal{O}_C(m))& = \frac{\tT_1^{-m}-\tT_1^{-m-1}}{1-\tT_1^{-1}-\tT_1\tT_2^{-1}+\tT_2^{-1}}+\frac{\tT_2^{-m}-\tT_2^{-m-1}}{1-\tT_2^{-1}-\tT_2\tT_1^{-1}+\tT_1^{-1}}\\
    &= \begin{dcases}
    -\tT_1\tT_2 \, \left(\sum_{i=0}^{-m-2}\tT_1^{i}\tT_2^{-m-2-i}\right),  & \mbox{ if } m<0;\mbox{ and} \\
    (\tT_1\tT_2)^{-m}\left(\sum_{i=0}^m\tT_1^i\tT_2^{m-i}\right), & \mbox{ if } m\geq 0.
    \end{dcases}
    \end{aligned}
    \end{equation}
    By the equivariant localization formula in $K$-theory, we also have  $Rp_*(\mathcal{O}_C(m)) = R\Gamma(\mathcal{O}_C(m)) \mathcal{O}_{\pt}$. These facts also hold non-equivariantly on any surface if one imposes $\tT_1=\tT_2=\CC$. Now, \ref{RHomprop3} follows from a calculation using equivariant Serre duality and the observation that, on $\Mhat^0(p^*c)$, we have the adjunction formula $\relhom(p^*\mathcal{F},C_m) = \relhom(\mathcal{F},p_*C_m)$, where $\mathcal{F}$ is the universal sheaf over $\PP^2\times M_{\PP^2}(c)$. Point \ref{RHomprop4} follows similarly, using that $K$-theoretically, we have $C_m = \mathcal{O}_{\Xhat}((m+1)\,C)-\mathcal{O}_{\Xhat}(mC)$ .
\end{proof}

The Nakajima--Yoshioka blow-up algorithm allows us to express intersections of the form  $\int_{\Mhatmc{m}}\Phi(\mathcal{E})$  in terms of integrals over $M(c')$ for various $c'$. Indeed, it gives a consistent way to compute classes $\Phi_n(\mathcal{E})$ such that
\[
\int_{\Mhatmc{m}}\Phi(\mathcal{E}) 
= 
\sum_{n\geq 0} \int_{M(c+n\pt)}\Phi_n(\mathcal{E}).
\]
For special choices of $\Phi$, we can say much more about the classes $\Phi_n$ (see Theorem \ref{thm: omega_j structure theorem}). We now state the key components of the algorithm.

\begin{thm}[Wall-Crossing Formula] \cite[Thm. 1.1]{KT_Blowup} \& \cite[Thm. 1.5]{NY_PCSOB3} \label{thm: wall crossing}
    Suppose we are in Situation \ref{sit: H-stable} or \ref{sit: framed main}, and let $\Phi(\mathcal{E})$ be as in Notation \ref{not: phi notation}. Then we have the following identity:
    \begin{align*}
        &\int_{\Mhatmc{m+1}}\Phi(\mathcal{E}) - \int_{\Mhatmc{m}}\Phi(\mathcal{E}) 
        \\
        &=
        \sum_{j > 0}\frac{1}{j!} \int_{\Mhat^m(\chat-j\eCH_m)} \Res\limits_{h_{j}=0} \cdots \Res\limits_{h_1=0}
        \,\Phi\Big(\mathcal{E}\oplus \bigoplus_{i=1}^j C_m\otimes \eLB^{-h_i}\Big)
        \,\Psi_m^j(\mathcal{E}),
    \end{align*}
    where $C_m := \mathcal{O}_C(-m-1)$, $\eCH_m := \ch(C_m)$, and where
    \[
        \Psi_m^j(\mathcal{E})
        :=
        \dfrac{\prod_{1\leq i_1\neq i_2\leq j}(h_{i_1}-h_{i_2})}{\prod_{i=1}^j \Eu(-\relhom(\mathcal{E},C_m)\, \eLB^{-h_i}) \Eu(-\relhom(C_m,\mathcal{E})\,\eLB^{h_i}) }.
    \]
\end{thm}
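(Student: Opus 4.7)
The natural strategy, going back to Nakajima--Yoshioka, is to interpolate between $\Mhatmc{m}$ and $\Mhatmc{m+1}$ via a master space carrying an auxiliary $\CC^*$-action, and then apply virtual (respectively equivariant) localisation. The plan is to build a moduli space $\widetilde{M}^m$ of enhanced objects whose stability wall-crossing produces $\Mhatmc{m}$ and $\Mhatmc{m+1}$ as the two open chambers, so that the difference of integrals becomes a sum of residue contributions from the remaining $\CC^*$-fixed components.

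Concretely, I would parametrise sheaves $E$ on $\Xhat$ of Chern character $\chat$ (with orientation in Situation \ref{sit: H-stable}, framed in Situation \ref{sit: framed main}) together with a stability parameter interpolating between the sub-object test $\Hom(\mathcal{O}_C(-m),E)=0$ of $m$-stability and the quotient test $\Hom(E,\mathcal{O}_C(-m-1))=0$ of $(m+1)$-stability. The only sheaves affected by the wall-crossing should be those admitting a destabilising split $E \cong E' \oplus C_m^{\oplus j}$ with $E' \in \Mhat^m(\chat - j\eCH_m)$ for some $j>0$, and these should constitute the non-open $\CC^*$-fixed components of $\widetilde{M}^m$. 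The key input guaranteeing that no further flipping occurs is the vanishing from Lemma \ref{lem: RHom properties}(ii), which produces honest vector bundles $\Ext^1_{\pi}(\mathcal{E},C_m)$ and $\Ext^1_{\pi}(C_{m-1},\mathcal{E})$ on $\Mhat^m(\chat)$; these are precisely the building blocks of $\Psi_m^j$.

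Applying virtual $\CC^*$-localisation in Situation \ref{sit: H-stable} and ordinary equivariant localisation in Situation \ref{sit: framed main} to $\int_{\widetilde{M}^m} \Phi(\mathcal{E})$, the difference of integrals over the two chambers equals a sum over $j>0$ of contributions from the flipping components. Each such component is (a symmetric-group quotient of) $\Mhat^m(\chat - j\eCH_m)$ with $j$ auxiliary $\CC^*$-weights $-h_1,\ldots,-h_j$ attached to the $C_m$-summands; the prefactor $1/j!$ compensates the $S_j$-symmetrisation. The Vandermonde $\prod_{i_1\neq i_2}(h_{i_1}-h_{i_2})$ arises from the $\Ext$ contributions between distinct $C_m$-factors (cf. Lemma \ref{lem: RHom properties}(iv)), while the two denominator factors of $\Psi_m^j$ are the equivariant Euler classes of the cross-terms in the deformation complex of $E'\oplus\bigoplus_i C_m\eLB^{-h_i}$, namely $\relhom(\mathcal{E},C_m)\eLB^{-h_i}$ and $\relhom(C_m,\mathcal{E})\eLB^{h_i}$. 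Applying the Atiyah--Bott formula then writes these contributions as iterated residues at $h_i = 0$.

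The main obstacle is the master space construction itself: one must exhibit $\widetilde{M}^m$ as a proper Deligne--Mumford stack with a two-term perfect obstruction theory compatible with the obstruction theories on the two chambers in Situation \ref{sit: H-stable}, and analogously as a $\torus\times\CC^*$-scheme with proper torus-fixed locus and compatible equivariant structure in Situation \ref{sit: framed main}. One must also verify that the $\CC^*$-fixed locus is precisely the claimed union of $\Mhat^m(\chat - j\eCH_m)$'s indexed by $j$. Once these geometric inputs are in place --- the substance of \cite{KT_Blowup} and \cite{NY_PCSOB3} --- the residue formula follows from the standard localisation recipe together with the explicit identification of the virtual normal bundles via Lemma \ref{lem: RHom properties}.
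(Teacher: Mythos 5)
The paper gives no proof of this theorem: it is quoted verbatim from \cite[Thm.~1.1]{KT_Blowup} and \cite[Thm.~1.5]{NY_PCSOB3}, and your sketch --- an enhanced master space interpolating between $m$- and $(m+1)$-stability, with (virtual) $\CC^*$-localisation producing the flip loci $\Mhat^m(\chat-j\eCH_m)$ and the iterated-residue form of $\Psi_m^j$ --- is precisely the strategy of those references. Your outline is consistent and correctly identifies where each ingredient of the formula comes from, though of course it defers all the substantive work (construction and properness of the master space, the obstruction theory, and the identification of the fixed locus) to the cited papers, exactly as this paper does.
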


\begin{rem} \label{rem: wall crossing remarks}
    In regards to Theorem \ref{thm: wall crossing}:
    \begin{enumerate}
        \item The notation for taking the residue at $h_i=0$  means: \textit{Taking the coefficient of $h_i^{-1}$ after expanding at $h_i=0$.}
        \item
            We have written the right hand side of Theorem \ref{thm: wall crossing} as an infinite sum. However, from Remark \ref{rem: extending integral to m-stable} and the discussion preceding Notation \ref{not: integral notation} we know that the (virtual) dimension of $\Mhat^m(\chat-j\eCH_m)$ decreases strictly as $j\gg 0$ increases. Hence, for any choice of $\chat$, the right hand side is a finite sum. 
    \end{enumerate}
\end{rem}

\begin{rem}
In the version of Theorem \ref{thm: wall crossing} stated in \cite{NY_PCSOB3}, only the action of a certain sub-torus of $\widetilde{T}=(\CC^{*})^{r+2}$ is considered. In correspondence with H. Nakajima, it was explained to us that this restriction is inessential, and that the results still hold when one considers the whole $\widetilde{T}$-action. 
\end{rem}

\begin{lem}[Twisting by $\mathcal{O}(C)$] \label{lem: Twisting by O(C)}
    \cite[Rem. 3.31]{KT_Blowup} \& \cite[\S 1.5.4.]{NY_PCSOB3}
    Suppose we are in Situation \ref{sit: H-stable} or \ref{sit: framed main}. For each $n\in \ZZ$, there is a natural isomorphism
    \[
        \Mhatmc{m}\xrightarrow{\sim} \Mhat^{m+n}\big(\chat\cdot \ch(\mathcal{O}_{\Xhat}(nC)\big)
    \]
    defined by sending a family of $m$-stable sheaves $\mathcal{F}$ to its twist  $\mathcal{F}(n C)$. In Situation \ref{sit: H-stable}, this isomorphism preserves the perfect obstruction theory and in Situation \ref{sit: framed main}, it is compatible with the $\torus$-equivariant structure.  
\end{lem}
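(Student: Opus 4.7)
The plan is to construct the map of moduli stacks/spaces on the level of families by the tensor product $\mathcal{F}\mapsto \mathcal{F}(nC):=\mathcal{F}\otimes q^*\mathcal{O}_{\Xhat}(nC)$ (with $q:M\times\Xhat\to\Xhat$ the projection), and then to verify four things: (a) the Chern character transforms as claimed, (b) $(m+n)$-stability is preserved, (c) the inverse is twisting by $\mathcal{O}(-nC)$, and (d) the additional structure (obstruction theory in Situation \ref{sit: H-stable}, $\torus$-equivariance in Situation \ref{sit: framed main}) is respected. Item (a) is immediate from multiplicativity of $\ch$, and (c) is formal once (b) is established symmetrically.

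For (b), the essential input is the identification $\mathcal{O}_{\Xhat}(C)|_C\cong\mathcal{O}_C(-1)$, which yields $\mathcal{O}_C(a)\otimes\mathcal{O}_{\Xhat}(-nC)\cong \mathcal{O}_C(a+n)$ for every $a$. Combining this with the tensor-Hom adjunction gives
\[
\Hom\!\bigl(\mathcal{O}_C(-(m+n)),\,\mathcal{F}(nC)\bigr)=\Hom\!\bigl(\mathcal{O}_C(-m),\mathcal{F}\bigr)=0,
\]
and likewise $\Hom(\mathcal{F}(nC),\mathcal{O}_C(-(m+n+1)))=\Hom(\mathcal{F},\mathcal{O}_C(-m-1))=0$. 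The orientation in Situation \ref{sit: H-stable} transforms via $\psi\otimes\mathrm{id}$ to an orientation $\det\mathcal{F}(nC)\xrightarrow{\sim} p^*L_X\otimes\mathcal{O}_{\Xhat}(-(k-rn)C)$, which is precisely the orientation datum compatible with the new first Chern class $\chat_1+rn[C]$; in Situation \ref{sit: framed main}, the framing along $\ell_\infty$ is inherited because $\ell_\infty\cap C=\emptyset$, so $\mathcal{O}_{\Xhat}(nC)$ is canonically trivialised on a neighbourhood of $\ell_\infty$.

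The main obstacle is the remaining condition defining $m$-stability in Situation \ref{sit: H-stable}: that the torsion-free quotient of $p_*\mathcal{F}$ be $H$-stable on $X$. The sheaves $\mathcal{F}$ and $\mathcal{F}(nC)$ agree on the open set $\Xhat\setminus C$, so $p_*\mathcal{F}$ and $p_*\mathcal{F}(nC)$ agree on $X\setminus\{\pt\}$; since $H$-(Gieseker)-stability of a torsion-free sheaf on a smooth surface depends only on its restriction away from a finite set of points (one works with saturations of coherent subsheaves, whose slopes and reduced Hilbert polynomials are determined in codimension one), the two conditions are equivalent. This reasoning applies uniformly in families.

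For item (d), note that tensoring both arguments of $R\shom$ by the same line bundle leaves the $K$-theory class unchanged, so
\[
\relhom\!\bigl(\mathcal{F}(nC),\mathcal{F}(nC)\bigr)=\relhom(\mathcal{F},\mathcal{F}),
\]
which shows the perfect obstruction theory is preserved on the nose in Situation \ref{sit: H-stable} (the universal trace-free part and orientation pin down the obstruction theory from this $K$-class, cf.\ \cite[Cor.\ 3.26 \& Prop.\ 5.12]{KT_Blowup}). In Situation \ref{sit: framed main}, the line bundle $\mathcal{O}_{\Xhat}(C)$ carries a canonical $\torus$-equivariant lift (pulled back from the $\torus$-equivariant structure on the blow-up), so the twist is a $\torus$-equivariant morphism, and the analogous identity for $R\pi_*R\shom(\mathcal{E},\mathcal{E}(-\ell_\infty))$ holds equivariantly since $\ell_\infty$ is disjoint from the locus where the twist differs from the identity.
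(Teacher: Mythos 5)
The paper offers no argument for this lemma---it is simply imported from the cited references---so your write-up is a reconstruction rather than a parallel of anything in the text. The construction you give (tensoring by $q^*\mathcal{O}_{\Xhat}(nC)$, checking the Chern character, verifying the two $\Hom$-vanishing conditions via $\mathcal{O}_{\Xhat}(C)|_C\cong\mathcal{O}_C(-1)$ and tensor--Hom adjunction, transporting the orientation and the framing, and observing that $\relhom(\mathcal{F}(nC),\mathcal{F}(nC))=\relhom(\mathcal{F},\mathcal{F})$) is the correct and standard route, and those steps all check out, including the bookkeeping $k\mapsto k-rn$ for the orientation line bundle and the equivariant lift of $\mathcal{O}_{\Xhat}(C)$ in Situation \ref{sit: framed main}.

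The one step that is wrong as stated is your treatment of the stability of the torsion-free quotient of $p_*\mathcal{F}$ in Situation \ref{sit: H-stable}. Gieseker $H$-stability of a torsion-free sheaf on a surface is \emph{not} determined by its restriction to the complement of a point: the reduced Hilbert polynomial has a constant term involving $\chi$, hence $c_2$, which changes under modification at a point. For instance, $\mathcal{O}_X^{\oplus 2}$ and $\mathcal{O}_X\oplus\mathcal{I}_{\pt}$ agree on $X\setminus\{\pt\}$, yet the first is Gieseker semistable while the second is destabilised by $\mathcal{O}_X$; so the blanket claim that ``reduced Hilbert polynomials are determined in codimension one'' fails. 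Your argument is rescued by the standing hypothesis of Situation \ref{sit: H-stable} that $(L_X,H)$ is coprime to $r$: for torsion-free sheaves of rank $r$ and first Chern class $c_1$ there are then no strictly slope-semistable objects, so Gieseker $H$-stability coincides with slope stability, and slope stability \emph{is} a codimension-one condition (it is tested on saturated subsheaves, whose rank and $H$-degree are determined on $X\setminus\{\pt\}$). You should invoke the coprimality explicitly at this point instead of the false general statement; with that substitution the proof is complete.
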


\begin{rem}
    Using the notation of Lemma \ref{lem: Twisting by O(C)}, suppose that $\widetilde{c} = \chat\cdot \ch(\mathcal{O}_{\Xhat}(nC))$. Then we have $(\widetilde{c}_1,[C]) = (\chat_1, [C]) - rn$. 
\end{rem}

The final ingredient is the following.

\begin{prop}[Grassmann-Bundle Formula] \label{prop: Gr bundle formula} 
    Suppose that we are in Situation \ref{sit: H-stable} or \ref{sit: framed main}, and that $\chat$ satisfies $(\chat_1,[C]) = -k$ for some $0< k< r$. Let $\Phi(\mathcal{E})$ be as in Notation \ref{not: phi notation}. Then 
    \[
        \int_{\Mhatmc{1}} \Phi(\mathcal{E}) 
        = 
        \frac{1}{{k}!}
        \int_{\Mhat^1(\chat - {k} \eCH_0)}
        \,\Res_{h_{{k}}=0}\cdots \Res_{h_1=0}
        \Phi\Big(\mathcal{E}\oplus \bigoplus_{i=1}^{{k}} C_0 \otimes \eLB^{-h_i}\Big) \,\overline{\Psi}^{{k}}(\mathcal{E}),
    \]
    where $C_0 = \mathcal{O}_C(-1)$ and where 
    \[
        \overline{\Psi}^{{k}}(\mathcal{E}) := \dfrac{\prod_{1\leq i_1\neq i_2\leq {k}}(h_{i_1}-h_{i_2})}{\prod_{i=1}^{k}  \Eu(-\relhom(C_{0},\mathcal{E})\,\eLB^{h_i})}.
    \]
\end{prop}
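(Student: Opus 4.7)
The plan is to derive the formula from two applications of the Wall-Crossing Formula (Theorem \ref{thm: wall crossing}), together with the simplification afforded by Lemma \ref{lem: RHom properties}. First I would apply Theorem \ref{thm: wall crossing} with $m=0$ to the class $\chat$, which yields
\[
\begin{aligned}
& \int_{\Mhatmc{1}}\Phi(\mathcal{E}) - \int_{\Mhatmc{0}}\Phi(\mathcal{E}) \\
&\quad = \sum_{j>0}\frac{1}{j!}\int_{\Mhat^0(\chat-j\eCH_0)}\Res_{h_j=0}\!\cdots\!\Res_{h_1=0}\Phi\Big(\mathcal{E}\oplus\bigoplus_{i=1}^j C_0\otimes\eLB^{-h_i}\Big)\Psi_0^j(\mathcal{E}).
\end{aligned}
\]
The key input is that $\Mhat^0(\widetilde{c})$ is empty unless $\widetilde{c}$ is a pullback from $X$, equivalently $(\widetilde{c}_1,[C])=0$. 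Applied to the shifted class $\widetilde{c} = \chat - j\eCH_0$ (for which $(\widetilde{c}_1,[C]) = -\ell + j$), this pinpoints $j=\ell$ as the only non-zero contribution on the right, and also kills the $\Mhatmc{0}$ term on the left.

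Next I would transfer the resulting $\Mhat^0$-integral to $\Mhat^1(\chat-\ell\eCH_0)$. Applying Theorem \ref{thm: wall crossing} a second time to the shifted class $\chat':=\chat-\ell\eCH_0$ with $m=0$, every $j>0$ contribution on the right is supported on $\Mhat^0(\chat'-j\eCH_0)$, which has $((\chat'-j\eCH_0)_1,[C])=-j\neq 0$ and is therefore empty by the emptiness claim above. This gives $\int_{\Mhat^0(\chat')}\alpha = \int_{\Mhat^1(\chat')}\alpha$ for any $\alpha$ of the required form. To replace $\Psi_0^\ell$ by $\overline{\Psi}^\ell$, I would apply Lemma \ref{lem: RHom properties}(iii) with $m=0$ to $\chat'$: since the relevant sum is empty, one obtains $\relhom(\mathcal{E},C_0)=0$ in $K$-theory on $\Mhat^0(\chat')$, so each factor $\Eu\big(-\relhom(\mathcal{E},C_0)\,\eLB^{-h_i}\big)$ equals $1$, and the denominators of $\Psi_0^\ell$ and $\overline{\Psi}^\ell$ then coincide.

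The main obstacle is establishing the emptiness claim in its full generality. For $(\widetilde{c}_1,[C])<0$ it is immediate from Lemma \ref{lem: RHom properties}(ii): it would force $\Ext^1_\pi(\mathcal{E},C_0)$ on $\Mhat^0(\widetilde{c})$ to be an honest vector bundle of negative rank $(\widetilde{c}_1,[C])$. For $(\widetilde{c}_1,[C])>0$ the analogous rank computation gives a positive rank and no immediate contradiction, so I would invoke the structural result underlying Proposition \ref{prop: stable as m-stable}(i) from \cite{NY_PCSOB3, KT_Blowup}: every $0$-stable sheaf on $\Xhat$ arises as a pullback from $X$, forcing $(\widetilde{c}_1,[C])=0$ whenever $\Mhat^0(\widetilde{c})$ is nonempty.
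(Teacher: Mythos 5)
Your strategy---deriving the Grassmann-bundle formula from two applications of Theorem \ref{thm: wall crossing}---is not the route the paper takes, and it rests on an emptiness claim that is false. The paper's proof is essentially a citation: it invokes the geometric structure theorem of \cite[Thm.~1.2]{NY_PCSOB3} and \cite[Thm.~1.2]{KT_Blowup}, which exhibits $\Mhatmc{1}$ as a Grassmann bundle of $\ell$-planes in the rank-$r$ bundle $\Ext^1_{\pi}(C_0,\mathcal{E})$ over $\Mhat^1(\chat-\ell\eCH_0)$, and then rewrites the push-forward along that bundle as an iterated residue using \cite[Prop.~2.2]{Pr_En}. That geometric input is logically independent of the wall-crossing formula and is used as a separate ingredient of the blow-up algorithm; it is not a corollary of it.

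The concrete gap is your claim that $\Mhat^0(\widetilde{c})$ is empty unless $(\widetilde{c}_1,[C])=0$. The half you actually prove is fine: for $(\widetilde{c}_1,[C])<0$, Lemma \ref{lem: RHom properties}\ref{RHomprop1}--\ref{RHomprop2} would produce a vector bundle of negative rank on a nonempty space. But for $(\widetilde{c}_1,[C])>0$ the claim fails. In Situation \ref{sit: framed main} the sheaf $E=\mathcal{O}_{\Xhat}(-C)\oplus\mathcal{O}_{\Xhat}^{\oplus(r-1)}$ with its evident framing is torsion-free and satisfies $\Hom(\mathcal{O}_C,E)=0$ (no maps from a torsion sheaf to a locally free one) and $\Hom(E,\mathcal{O}_C(-1))=\Hom(\mathcal{O}_{\Xhat}(-C),\mathcal{O}_C(-1))=H^0(C,\mathcal{O}_C(-2))=0$, so it is $0$-stable with $(\chat_1,[C])=1>0$. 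Proposition \ref{prop: stable as m-stable}\ref{mstabi} asserts the isomorphism $\Mhatmc{0}\simeq M(c)$ only \emph{under the hypothesis} $\chat=p^*c$; the converse statement you invoke---that every $0$-stable sheaf is a pullback---is not contained in it and is refuted by this example. Consequently the terms with $j>\ell$ in your first wall-crossing step, and all of the correction terms in your second step (which live on $\Mhat^0(\chat'-j\eCH_0)$ with intersection number $j>0$), are not killed by emptiness; nor do they vanish for dimension reasons in general. Indeed, if they did vanish, the bookkeeping in Algorithms \ref{alg: NY blow-up single term} and \ref{alg: NY blow-up single term (refined)}, which carry exactly such $\Mhat^0(\chat-j\eCH_0)$ terms along in the collection $I$ rather than discarding them, would be unnecessary. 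Your final observation---that $\relhom(\mathcal{E},C_0)=0$ on $\Mhat^0$ when $(\chat_1,[C])=0$ by Lemma \ref{lem: RHom properties}\ref{RHomprop3}, which would reconcile $\Psi^{\ell}_0$ with $\overline{\Psi}^{\ell}$---is correct, but it cannot repair the argument without the emptiness statement.
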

\begin{proof}
    This is a reformulation of the results from \cite[Thm. 1.2 \& Prop. 3.36]{KT_Blowup} and \cite[Prop. 1.2]{NY_PCSOB3}. More precisely, by the results in \cite{KT_Blowup} and \cite{NY_PCSOB3}, we have 
    \[\int_{\Mhatmc{1}} \Phi(\mathcal{E}) 
        = \int_{\Mhat^1(\chat - {k} \eCH_0)} q_*\Phi(q^*\mathcal{E}\oplus  C_0\otimes\mathcal{V}), \]
    where $q:\operatorname{Gr}({k}, \Ext^1_{\pi}(C_{0},\mathcal{E}) ) \to \Mhat^1(\chat - {k} \eCH_0)$ is the relative Grassmannian of ${k}$-planes and $\mathcal{V}$ is the associated universal sub-bundle. 
    
    Let $r$ denote the rank of $\Ext^1_{\pi}(C_0,\mathcal{E})$. 
    By the formula of \cite[Example 1]{DP_Flag}, $q_*\Phi(q^*\mathcal{E}\oplus  C_0 \otimes\mathcal{V})$ is equal to the following expression:
    \[
        \operatorname{Coeff}_{h_1^{r-1}\cdots h_k^{r-k}}\hspace{-4pt}\left[{\Phi\Big(q^*\mathcal{E} \oplus \bigoplus_{i=1}^k C_0\otimes \eLB^{-h_i}\Big) \hspace{-5pt}\prod_{1\leq i_1 < i_2 \leq k} (h_{i_1}-h_{i_2}) \hspace{-3pt}\prod_{1\leq i\leq k}} {s_{\frac{1}{h_i}}(\Ext^1_{\pi}(C_0,\mathcal{E}))}\right]
    \]  
    Here, $s_{1/h_i}$ denotes the Segre polynomial in the variable $1/h_i$. We have
    \[
        s_{\frac{1}{h_i}}\big(\Ext^1_{\pi}(C_0,\mathcal{E})\,\eLB^{h_i}\big) =h_i^{r} \Eu\big(\Ext^1_{\pi}(C_0,\mathcal{E})\,\eLB^{h_i}\big)^{-1},
    \]
    and $\Ext^1_{\pi}(C_0,\mathcal{E}) = -\relhom(C_{0},\mathcal{E})$,
    from which we find that $q_*\Phi(q^*\mathcal{E}\oplus  C_0 \otimes\mathcal{V})$ is 
    \begin{align}
        \operatorname{Coeff}_{h_1^{-1}\cdots h_k^{-k}}\hspace{-4pt}\left[\frac{\Phi\Big(q^*\mathcal{E} \oplus \bigoplus_{i=1}^k C_0\otimes \eLB^{-h_i}\Big) \prod_{1\leq i_1 < i_2 \leq k} (h_{i_1}-h_{i_2}) } {\Eu(-\relhom(C_{0},\mathcal{E})\,\eLB^{h_i})}\right].
        \label{eq: Grassmannian formula as coeff}
    \end{align}

    We claim that averaging the expression in \eqref{eq: Grassmannian formula as coeff} over all permutations of $1,\ldots,k$ recovers the iterated residue term in the statement of the proposition. Indeed, in our case the expansion at $h_1 = \ldots = h_k = 0$ is independent of the ordering of the $h_i$, so the iterated residue is simply the coefficient of $h_1^{-1}\cdots h_k^{-1}$. Now the claim  follows from the Vandermonde identity
    \[\sum_{\sigma\in \Sigma_k}\operatorname{sgn} \sigma\; h_{\sigma 1}^0\ldots h_{\sigma k}^{k-1} = \prod_{1\leq i_1<i_2\leq k}(h_{i_2}-h_{i_1}).\]
\end{proof}

\begin{alg}[Blow-Up Algorithm For Single Terms] \label{alg: NY blow-up single term}
    Let $0\leq \ell < r$ and $n\geq 0$ be integers
    and set 
    $\chat :=p^*c-\ell \eCH_0 + n[\pt]$.
    
    \begin{itemize}[leftmargin=2em]
        \item[$\blacktriangleright$] 
        \noindent{\ttfamily INPUT:} The input  for the algorithm is an integral of the form
        \[
            \int_{\Mhatmc{m}} \Phi(\mathcal{E}).
        \]
        \item[$\blacktriangleright$] 
        \noindent{\ttfamily OUTPUT:} The output is an identity of the form
        \[
                \int_{\Mhatmc{m}}\Phi(\mathcal{E})
                = 
                \int_{M(c+n[\pt])}\Phi'(\mathcal{E}) 
                + 
                \sum_{i\in I} 
                \int_{M_i}\Phi_i(\mathcal{E}),
        \]
        where
        \begin{enumerate}[label=(\roman*)]
            \item
            $\Phi'$ is as in Notation \ref{not: phi notation}, and is zero if $0<\ell<r$. 
            \item
            $I$ is a finite collection indexing pairs $(M_i,\Phi_i)$ where $\Phi_i$ is as in Notation \ref{not: phi notation} and $M_i$ is of the form $\Mhat^{m_i}(b_i)$ with $b_i=p^*c-\ell_i \eCH_0 + n_i[\pt]$ for integers $0\leq \ell_i < r$ and $n_i\in \ZZ$ such that 
                $
                    \vdim\,\Mhat^{m_i}(b_i) 
                    ~<~
                    \vdim\,\Mhatmc{m}.
                $
        \end{enumerate}
        \item[$\blacktriangleright$] 
        \noindent{\ttfamily PROCEDURE:}
        The algorithm proceeds as follows:
        \begin{enumerate}[label=\Alph*]
            \item  \label{singlefirststep}
            Use Theorem \ref{thm: wall crossing} a total of $m-1$ times to obtain the expression:
            \[
                \hspace{2em}\int_{\Mhatmc{m}} \Phi(\mathcal{E}) 
                = 
                \int_{\Mhatmc{0}}\Phi(\mathcal{E})
                +
                \sum_{\tilde{m}=0}^{m-1}
                \sum_{j>0}
                \int_{\Mhat^{\tilde{m}}(\chat-j\eCH_0+j\tilde{m}[\pt])}\Phi_{\tilde{m},j}(\mathcal{E}).
            \]
            Here, we also used $\epsilon_m = \epsilon_0 - m[\pt]$ from the \hyperlink{sec: basic notation}{Basic Notation}. 
            \item\label{singlesecstep}
            If $\ell=0$, then we apply Proposition \ref{prop: stable as m-stable} to the first term on the right-hand side of step \ref{singlefirststep}. The second term on the right-hand side of step \ref{singlefirststep} becomes the collection indexed by $I$. We apply Lemma \ref{lem: Twisting by O(C)} for each term if necessary to obtain moduli spaces of the desired form. More precisely, for the term in the sum corresponding to indices $\tilde{m}$ and $j$, we apply the lemma for the twist by $\mathcal{O}(\lfloor j/r\rfloor C)$. This completes the algorithm in the case $\ell=0$. 
            \item \label{singlethirdstep}
            If $0 < \ell < r$, then we twist the first term on the right-hand side of step \ref{singlefirststep} by $\mathcal{O}(C)$ and apply Lemma \ref{lem: Twisting by O(C)}. The resulting term has Chern character $\chat + r \eCH_0 + \ell [\pt]$ and  satisfies the conditions of Proposition \ref{prop: Gr bundle formula} with $k = r-\ell$. Applying Proposition \ref{prop: Gr bundle formula} strictly decreases the virtual dimension. The term $\int_{\Mhatmc{0}} \Phi(\mathcal{E})$ from step \ref{singlefirststep} becomes
            \[
                \int_{\Mhat^1(p^*c + (n+\ell)[\pt])}\Phi''(\mathcal{E}). 
            \]
            This, and the remaining terms from step \ref{singlefirststep}, become the collection indexed by $I$ (observing that $\Phi' =0$ in this case). As detailed in Step \ref{singlesecstep}, we apply Lemma \ref{lem: Twisting by O(C)} for various twists to bring the remaining terms in $I$ into the desired form. This completes the algorithm.  
        \end{enumerate}
    \end{itemize}
\end{alg}

\begin{alg}[\textit{Nakajima--Yoshioka Blow-Up Algorithm}]\label{alg: NY blow-up (full)}
    Let $\ell \geq 0$ be an integer
    and suppose
    $\chat :=p^*c-\ell \eCH_0$.
    \begin{itemize}[leftmargin=2em]
        \item[$\blacktriangleright$]
        \noindent{\ttfamily INPUT:} The input  for the algorithm is an integral of the form
        \[
            \int_{\Mhatmc{m}} \Phi(\mathcal{E}).
        \]
        \item[$\blacktriangleright$]
        \noindent{\ttfamily OUTPUT:} The output is an identity of the form
        \[
            \int_{\Mhatmc{m}}\Phi(\mathcal{E}) 
            = 
            \sum_{n\geq 0} \int_{M(c+n[\pt])}\Phi'_n(\mathcal{E}).
        \]
        \item[$\blacktriangleright$]
        \noindent{\ttfamily PROCEDURE:}
        The algorithm proceeds as follows:
        \begin{enumerate}[label=\Alph*]
            \item 
            We can assume $0\leq \ell <r$ by applying Lemma \ref{lem: Twisting by O(C)} an appropriate number of times. Apply  Algorithm \ref{alg: NY blow-up single term} to obtain
            \[
                \int_{\Mhatmc{m}}\Phi(\mathcal{E})
                = 
                \int_{M(c)}\Phi'_0(\mathcal{E}) 
                + 
                \sum_{i\in I} 
                \int_{M_i}\Phi_i(\mathcal{E}).
        \]
            \item \label{totalstepii}
             Apply  Algorithm \ref{alg: NY blow-up single term} to those terms indexed by $i\in I$ for which $M_i$ has maximal virtual dimension, and remove those terms from $I$. Collect the resulting output terms of type (i) to form $\Phi_n'$ for the appropriate $n$ (since the virtual dimensions of the spaces appearing in the algorithm are bounded above by $M(c)$, one automatically has $n\geq 0$). Collect the resulting output terms of type (ii) of lower virtual dimension and add those terms to $I$.  We now use $I$ to denote the indexing set for this new collection. 
             \item 
             Iterate Step \ref{totalstepii} until $I$ indexes the empty set. This will terminate, because after each step, $I$ is finite and the maximal virtual dimension of the moduli spaces indexed by $I$ decreases at each step.  
        \end{enumerate}
    \end{itemize}
    
\end{alg}

\section{Weak structure theorem}\label{sec: weak structure thm} 
Here we explain a refined version of Nakajima--Yoshioka's blowup formalism which holds when integrating a multiplicative class of the tangent bundle.\footnote{Recall that a characteristic class $\Theta:K(-)\to A^*(-)$ is multiplicative if it satisfies the identity  $\Theta(F+E) = \Theta(F)\cdot \Theta(E)$ for all choices of $F$ and $E$.} This gives a unified version of Nakajima--Yoshioka's \cite[Thm. 2.6]{NY_PCSOB3} and Kuhn--Tanaka's \cite[Thm. 1.5]{KT_Blowup}. In particular, we give a concise presentation of how the two situations are related, which is only implicit in their work. We begin this section by defining some useful notation.

\begin{notation}
    Denote by $\mathcal{S}$ the collection of symbols
    \[
        \Big\{\,\,
        c_i\big(\relhom(C_m,-)\big)
        \,,~
        c_i\big(\relhom(-,C_m)\big)
        \,\,\Big\}_{i\in\ZZ_{>0}, m\in \ZZ}
    \]
    and $\mathfrak{R} := \coeffs\psl \mathcal{S} \psr\psl \varepsilon_1,\varepsilon_2\psr$ the ring of power series in the symbols from $\mathcal{S}$ and in variables $\varepsilon_1,\varepsilon_2$.
    For an element $P\in \mathfrak{R}$ and for $\mathcal{F}$ a sheaf on $M\times \Xhat$, we denote by $P(\mathcal{F})$ the Chow class in $A^*(M)$ obtained by evaluating the symbols of $\mathcal{S}$ at $\mathcal{F}$ and by setting $\varepsilon_i = c_1(\tT_i)$ in Situation \ref{sit: framed main}, respectively $\varepsilon_{i}=0$ in Situation \ref{sit: H-stable}. 
\end{notation}

We now state the main result of this section. 

\begin{thm}[Weak Structure Theorem] 
    \label{thm: omega_j structure theorem}
	Let $k\geq 0$ be an integer, $\Theta$ a multiplicative class and $\nu_1, \ldots, \nu_r$ be formal variables.  	
	There exists a unique collection of power series $\{ \,\Omega_n \}_{n\geq 0}$ in $\coeffs\psl\varepsilon_1,\varepsilon_2, \nu_1,\ldots,\nu_r\psr$ which satisfies the following condition:
    \begin{flushright}
        \begin{tabular}{p{0.00\linewidth} | p{0.9\linewidth} @{}}
            \hspace{1em}&        Whenever we are in Situation \ref{sit: H-stable} or \ref{sit: framed main} with $\chat$ satisfying $(\chat_1,[C]) =k$, then we have\\[0.5em]
            &
            \hspace{2.5em}
            $\displaystyle
                   \int_{\Mhatc} \Theta\big(\Tvir\big)  = 
    	 	\sum_{n\geq 0} \int_{M(c+n\,[\pt])} \Theta\big(\Tvir\big) \, \Omega_n(\mathcal{E}),
            $\\
            &
        	where $\Omega_n(\mathcal{E})$ denotes the Chow-cohomology class obtained by replacing the variable $\nu_i$ by the class $c_i(\relhom(\mathcal{O}_{\pt},\mathcal{E}))$.
        \end{tabular}
    \end{flushright}
    In fact, the collection $\{\Omega_n\}_{n\geq 0}$ is uniquely determined if one only assumes this condition to hold in Situation \ref{sit: framed main}.
\end{thm}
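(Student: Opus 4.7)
The plan is to extract the universal series $\{\Omega_n\}$ directly from the output of Algorithm \ref{alg: NY blow-up (full)} applied to the multiplicative integrand $\Theta(\Tvir)$. First I would use Proposition \ref{prop: stable as m-stable}(ii) to replace $\Mhatc$ by $\Mhatmc{m}$ for $m$ sufficiently large, and then run the blow-up algorithm on $\int_{\Mhatmc{m}} \Theta(\Tvir)$, obtaining an expression
\[
\int_{\Mhatc} \Theta(\Tvir)
=
\sum_{n\geq 0} \int_{M(p_*c+n[\pt])} \Phi_n'(\mathcal{E})
\]
for certain classes $\Phi_n'(\mathcal{E})$ built out of iterated residues. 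The remaining task is to rewrite each $\Phi_n'(\mathcal{E})$ as a product $\Theta(\Tvir)\cdot \Omega_n(\mathcal{E})$ for a universal power series $\Omega_n \in \coeffs\psl\varepsilon_1,\varepsilon_2,\nu_1,\ldots,\nu_r\psr$ depending only on $r$ and $k$.

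The key leverage here is the multiplicativity of $\Theta$. Each residue operation in Theorem \ref{thm: wall crossing} or Proposition \ref{prop: Gr bundle formula} enlarges the universal sheaf to $\mathcal{E}\oplus\bigoplus_i C_m\otimes \eLB^{-h_i}$, and multiplicativity factors $\Theta(\Tvir_\mathcal{E})$ off the deformation-theoretic contribution of the new summands. That contribution is a product of classes of the form $\relhom(\mathcal{E},C_m)$, $\relhom(C_m,\mathcal{E})$, and $\Hom_{\Xhat}(C_{m_1},C_{m_2})$, all equivariantly twisted by the residue variables $h_i$. Upon termination the algorithm leaves integrals over $\Mhatmc{0}$ (identified via Proposition \ref{prop: stable as m-stable}(i) with moduli of sheaves on $X$), and on these spaces Lemma \ref{lem: RHom properties}(iii)--(iv) rewrites every occurrence of $\relhom(C_m,\mathcal{E})$ and $\relhom(\mathcal{E},C_m)$ as a universal $\coeffs[\varepsilon_1,\varepsilon_2]$-linear combination of $\relhom(C_0,\mathcal{E})$, while the adjunction-and-Serre-duality argument in the proof of that lemma further reduces $\relhom(C_0,\mathcal{E})$ to $\relhom(\mathcal{O}_{\pt},\mathcal{E})$ up to a universal factor. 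Integrating out the $h_i$ by residues then produces a polynomial expression in $\varepsilon_1,\varepsilon_2$ and the Chern classes $c_i(\relhom(\mathcal{O}_{\pt},\mathcal{E}))$; the identification $\nu_i \leftrightarrow c_i(\relhom(\mathcal{O}_{\pt},\mathcal{E}))$ exhibits the desired $\Omega_n$. Universality is automatic from this derivation, since the algorithm only ever sees $K$-theoretic ranks (controlled by $r$ and $k$ via Lemma \ref{lem: RHom properties}(i)) and universal coefficients of $\Theta$, never the specific surface, polarization, or lift of $p_*c$; in particular the same $\Omega_n$ simultaneously handles Situations \ref{sit: H-stable} and \ref{sit: framed main}.

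For uniqueness in Situation \ref{sit: framed main}, suppose $\{\Omega_n\}$ and $\{\Omega_n'\}$ both satisfy the identity and let $\Delta_n := \Omega_n - \Omega_n'$; then $\sum_n \int_{M_{\PP^2}(p_*c+n[\pt])} \Theta(\Tvir)\cdot \Delta_n(\mathcal{E}) = 0$ for every admissible input. Because $c_i(\relhom(\mathcal{O}_{\pt},\mathcal{E}))$ has bounded codimension while $\vdim\, M_{\PP^2}(r+N[\pt])$ grows linearly in $N$, a degree-by-degree truncation reduces the problem to the vanishing of individual integrals $\int_{M_{\PP^2}(r+N[\pt])} \Theta(\Tvir)\cdot P(\mathcal{E}) = 0$ for each polynomial component $P$ of the $\Delta_n$. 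The Atiyah--Bott formula then translates this into identities on the torus-fixed locus of $M_{\PP^2}(r+N[\pt])$, which is indexed by $r$-tuples of partitions of size $N$, and at each fixed point the classes $c_i(\relhom(\mathcal{O}_{\pt},\mathcal{E}))$ have explicit expressions in $\varepsilon_1,\varepsilon_2$ and the framing characters $\eT_1,\ldots,\eT_r$. The step I expect to be hardest is showing that, as $N$ and $r$ vary, these fixed-point evaluations are generic enough to detect any nonzero formal power series in $\coeffs\psl \varepsilon_1, \varepsilon_2, \nu_1, \ldots, \nu_r \psr$; this should follow from a dimension count on the relevant polynomial rings, but the combinatorial bookkeeping is the main technical hurdle.
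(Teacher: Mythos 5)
Your existence argument follows the paper's route (run the refined blow-up algorithm and use multiplicativity of $\Theta$ to peel off $\Theta(\Tvir)$ from the contributions of the extra summands $C_m\otimes\eLB^{-h_i}$), but it skips the step that makes the statement true as formulated: why is $\Omega_n$ a power series in only the \emph{first $r$} Chern classes $\nu_1,\ldots,\nu_r$? A priori the algorithm outputs a power series in $c_i(\relhom(\mathcal{O}_{\pt},\mathcal{E}))$ for \emph{all} $i\geq 1$. The paper handles this by wall-crossing from $\Mhatmc{0}$ up to $\Mhatmc{1}$, where $-\relhom(C_0,\mathcal{E})$ is an honest rank-$r$ vector bundle by Lemma \ref{lem: RHom properties} \ref{RHomprop2}, truncating all Chern classes of degree $>r$ there, and wall-crossing back down. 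This truncation is not cosmetic: on the test data used to pin the series down, $\relhom(\mathcal{O}_{\pt},\mathcal{E}_0)$ has rank $r$, so all $c_i$ with $i>r$ evaluate to zero and uniqueness would fail for power series in infinitely many $\nu$-variables.

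The more serious gap is in uniqueness. Your reduction from the vanishing of $\sum_n\int_{M(p_*c+n[\pt])}\Theta(\Tvir)\,\Delta_n(\mathcal{E})$ for all admissible inputs to the vanishing of individual integrals $\int\Theta(\Tvir)P(\mathcal{E})$ is not justified --- for a fixed $\chat$ all the $\Delta_n$ are coupled in a single identity --- and the genericity of fixed-point evaluations that you flag as the hard step is precisely the part that would need to be proved. The paper avoids all of this with a triangular induction on $n_0$: taking $\chat=r-k\eCH_0-n_0[\pt]$ and subtracting the terms with $n<n_0$, the remaining $n=n_0$ term is an integral over $M(r+0[\pt])$, which is a single reduced point with trivial virtual tangent bundle and universal sheaf $\bigoplus_{i=1}^r\mathcal{O}_{\PP^2}\,\eT_i$, so it equals $\Omega_{n_0}$ evaluated at $\nu_i=\sigma_i\bigl(c_1(\eT_1),\ldots,c_1(\eT_r)\bigr)$ and $\varepsilon_a=c_1(\tT_a)$. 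Since the elementary symmetric functions $\sigma_1,\ldots,\sigma_r$ in the analytically independent classes $c_1(\eT_i)$ are themselves analytically independent, this determines $\Omega_{n_0}$ from $\Omega_0,\ldots,\Omega_{n_0-1}$ (including the base case). You should replace your genericity argument with this observation; it also makes transparent why only $r$ of the $\nu$-variables can appear.
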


\begin{rem}
More generally, the existence of Theorem \ref{thm: omega_j structure theorem} holds (with the same proof) if instead of just a multiplicative class of the tangent bundle, one considers a product $\Theta(T^{\vir})\Phi(\mathcal{E})$, where $\Theta$ is multiplicative, and $\Phi$ is as in Notation \ref{not: phi notation}, and is also multiplicative in its input, i.e. $\Phi(\mathcal{F}_1\oplus \mathcal{F}_2)=\Phi(\mathcal{F}_1)\cdot \Phi(\mathcal{F}_2)$. Here, $\Phi$ may only be defined for a particular choice of $X$ or for a particular situation. The uniqueness statement holds (with the same proof) whenever $\Phi$ is defined in a way that makes sense in Situation \ref{sit: framed main}.  
\end{rem}

The idea of the proof is to follow a refined version of the Nakajima--Yoshioka Blow-Up Algorithm (Algorithm \ref{alg: NY blow-up (full)}) which keeps track of extra structure related to the coefficients $\Phi_{i}$. 
We first collect some consequences from Theorem \ref{thm: wall crossing} and Proposition \ref{prop: Gr bundle formula} when applied to multiplicative classes.

\begin{lem}[Series version of Nakajima--Yoshioka-Algorithm components]
\label{lem: algo steps multiplicative}
For $P\in \mathfrak{R}$ the components of Algorithm \ref{alg: NY blow-up (full)} (the Nakajima--Yoshioka Blow-Up Algorithm) define canonical series in $\mathfrak{R}$ in the following ways:
\begin{enumerate}[label = (\roman*)]
    \item \emph{Wall-Crossing Formula (Theorem \ref{thm: wall crossing}):} For integers $k,m\geq 0$, there exists a canonical collection of power series $\{Q_j\}_{j\geq 1}$ in $\mathfrak{R}$ which satisfies the following condition:
    \begin{flushright}
        \begin{tabular}{p{0.00\linewidth} | p{0.95\linewidth} @{}}
            \hspace{1em}&        Whenever we are in Situation \ref{sit: H-stable} or \ref{sit: framed main}  with $\chat$ satisfying $(\chat_1,[C]) =k$, then we have\\[0.5em]
            &
            \hspace{2.5em}
            $\displaystyle
                   \int_{\Mhatmc{m+1}}\Theta(\Tvir)P(\mathcal{E}) - \int_{\Mhatmc{m}}\Theta(\Tvir)P(\mathcal{E})
            $ \\[1em]
            &
            \hspace{7em}
            $\displaystyle
                   =~\sum_{j\geq 1}\int_{\Mhat^m(\chat-j\eCH_m)}\Theta(\Tvir)Q_j(\mathcal{E}).
            $ 
        \end{tabular}
    \end{flushright}
    \item \label{algstepmultii}
    \emph{Twisting by $\mathcal{O}(C)$ (Lemma \ref{lem: Twisting by O(C)}):}
    There exists a canonical power series $P^{\dagger}\in \mathfrak{R}$ satisfying $P^{\dagger}(\mathcal{E}) = P(\mathcal{E}(C))$ whenever we are in Situation \ref{sit: H-stable} or \ref{sit: framed main} with arbitrary $\chat$. 
    \item
    \emph{$\mathrm{Gr}$-Bundle Formula (Proposition \ref{prop: Gr bundle formula}):}
    For each integer $0<k< r$ there exists a canonical power series $Q$ in $\mathfrak{R}$, which satisfies the condition:
    \begin{flushright}
        \begin{tabular}{p{0.00\linewidth} | p{0.95\linewidth} @{}}
            \hspace{1em}&        Whenever we are in Situation \ref{sit: H-stable} or \ref{sit: framed main}  with $\chat$ satisfying $(\chat_1,[C])=-k$, then we have \\[0.5em]
            &
            \hspace{2.5em}
            $\displaystyle
                   \int_{\Mhatmc{1}}\Theta(\Tvir)P(\mathcal{E}) = \int_{\Mhat^1(\chat-k\eCH_0)}\Theta(\Tvir)Q(\mathcal{E}).
            $ 

        \end{tabular}
    \end{flushright}

\end{enumerate}
\end{lem}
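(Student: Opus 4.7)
The plan is to prove each of the three parts by tracing through the corresponding statement from Section \ref{sec: NY blow-up} with $\Phi(\mathcal{E}) = \Theta(\Tvir)\, P(\mathcal{E})$, separating out the factor $\Theta(\Tvir)$ in the integrand on the right-hand side, and identifying the remainder as the evaluation at $\mathcal{E}$ of a canonical element of $\mathfrak{R}$. The central mechanism is multiplicativity of $\Theta$, together with Lemma \ref{lem: RHom properties}\ref{RHomprop4}: the latter ensures that $\Hom_{\Xhat}(C_{m_1},C_{m_2})$ -- the only block in the decomposition that is not manifestly built from the symbols of $\mathcal{S}$ -- contributes, after $\Theta$, only a polynomial in $\varepsilon_1,\varepsilon_2$. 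Canonicality of the resulting series will follow from the fact that every step is a formal manipulation that does not refer to the specific $X$, $H$ or $\chat$.

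For part (i), I would set $\mathcal{F} := \mathcal{E} \oplus \bigoplus_{i=1}^{j} C_m \otimes \eLB^{-h_i}$ and expand
\[
\relhom(\mathcal{F},\mathcal{F}) = \relhom(\mathcal{E},\mathcal{E}) + \sum_i \relhom(\mathcal{E},C_m)\,\eLB^{-h_i} + \sum_i \relhom(C_m,\mathcal{E})\,\eLB^{h_i} + \sum_{i,i'} \relhom(C_m,C_m)\,\eLB^{h_i - h_{i'}}.
\]
Multiplicativity of $\Theta$ splits $\Theta(\Tvir_{\mathcal{F}})$ into $\Theta(\Tvir_{\mathcal{E}})$ times a product of universal factors in the symbols $c_k(\relhom(\mathcal{E},C_m))$, $c_k(\relhom(C_m,\mathcal{E}))$, and (by Lemma \ref{lem: RHom properties}\ref{RHomprop4}) in $\varepsilon_1,\varepsilon_2$. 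The factor $P(\mathcal{F})$ expands analogously by additivity of Chern classes in short exact sequences, while $\Psi_m^j(\mathcal{E})$ is manifestly a universal expression in the same symbols together with the $h_i$. Since each symbol of $\mathcal{S}$ is a positive-degree Chow class (hence nilpotent), expansion in the $h_i$ is well-defined, and the residue $\Res_{h_j=0}\cdots\Res_{h_1=0}$ extracts a canonical polynomial. Dividing by $j!$ and summing over $j$ packages this into $Q_j \in \mathfrak{R}$.

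For part (ii), the substitution rule arises from $C_m \otimes \mathcal{O}_{\Xhat}(-C) = C_{m-1}$, which follows from $\mathcal{O}_{\Xhat}(-C)|_C = \mathcal{O}_C(1)$, giving
\[
\relhom(C_m,\mathcal{E}(C)) = \relhom(C_{m-1},\mathcal{E}) \qquad\text{and}\qquad \relhom(\mathcal{E}(C),C_m) = \relhom(\mathcal{E},C_{m-1}).
\]
For $m \geq 1$ this is a direct substitution among the symbols of $\mathcal{S}$. The case $m=0$ produces the class $[C_{-1}]=[\mathcal{O}_C]$, which is not among the generators of $\mathcal{S}$; here I would invoke the $K$-theoretic identity $[C_k]=[\mathcal{O}_{\Xhat}((k+1)C)]-[\mathcal{O}_{\Xhat}(kC)]$ (used in the proof of Lemma \ref{lem: RHom properties}\ref{RHomprop4}) and telescope to rewrite $\relhom(C_{-1},\mathcal{E})$ as an integer combination of $\relhom(C_k,\mathcal{E})$ for $k\geq 0$ plus the trivial contribution. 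Applying these substitutions symbol by symbol to the power series $P$ produces the canonical $P' \in \mathfrak{R}$.

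Part (iii) follows by the same strategy as (i), applied to Proposition \ref{prop: Gr bundle formula}: one takes $\mathcal{F} = \mathcal{E} \oplus \bigoplus_{i=1}^{k} C_0 \otimes \eLB^{-h_i}$, splits $\Theta(\Tvir_{\mathcal{F}})$ via multiplicativity, expands $P(\mathcal{F})$ via additivity, and combines with $\overline{\Psi}^{k}(\mathcal{E})$, which in this case involves only $\relhom(C_m,\mathcal{E})$-type factors. The residue then extracts the canonical $Q \in \mathfrak{R}$. In all three parts, uniqueness of the constructed series is automatic from the formal nature of the manipulation. I expect the main obstacle to be the careful bookkeeping in part (ii) around the boundary index $m=0$ -- making sure the telescoping $K$-theoretic rewrite of $[C_{-1}]$ genuinely lands in $\mathfrak{R}$ rather than only in some larger auxiliary ring -- since this is where the closure of $\mathfrak{R}$ under the operations of the blow-up algorithm is least transparent.
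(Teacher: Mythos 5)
Your parts (i) and (iii) follow the paper's proof essentially verbatim: decompose $\relhom(\mathcal{F},\mathcal{F})$ for $\mathcal{F}=\mathcal{E}\oplus\bigoplus_{i}C_m\otimes\eLB^{-h_i}$ into its four blocks, use multiplicativity of $\Theta$ to split off $\Theta(\Tvir)$, invoke Lemma \ref{lem: RHom properties} \ref{RHomprop4} so that the $\relhom(C_m,C_m)$-block contributes only through $\varepsilon_1,\varepsilon_2$, and note that expansion and residue extraction in the $h_i$ are purely formal operations; the same analysis covers $\Psi_m^j$ and $\overline{\Psi}^{\ell}$. Those parts are correct and are the paper's argument.

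Part (ii) contains a genuine gap, and it is precisely the one you flag at the end of your proposal. Taking the printed equation $P'(\mathcal{E})=P(\mathcal{E}(C))$ at face value, your substitution $\relhom(C_m,-)\mapsto\relhom(C_{m-1},-)$ is forced by $C_m\otimes\mathcal{O}_{\Xhat}(-C)=C_{m-1}$, but at $m=0$ it produces $\relhom(C_{-1},\mathcal{E})=\relhom(\mathcal{O}_C,\mathcal{E})$, and your telescoping repair cannot succeed: $K$-theoretically $[C_{-1}]=[\mathcal{O}_{\Xhat}]-[\mathcal{O}_{\Xhat}(-C)]$, while every $[C_k]$ with $k\geq 0$ equals $[\mathcal{O}_{\Xhat}((k+1)C)]-[\mathcal{O}_{\Xhat}(kC)]$, so no finite integer combination of them produces the summand $[\mathcal{O}_{\Xhat}(-C)]$. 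Consequently $\relhom(C_{-1},\mathcal{E})$ involves $R\pi_*\mathcal{E}$ and $R\pi_*(\mathcal{E}(C))$, which are not expressible in the symbols of $\mathcal{S}$, and the down-shifted series genuinely leaves $\mathfrak{R}$. The paper's proof performs the opposite substitution, $\relhom(C_m,-)\mapsto\relhom(C_{m+1},-)$ and $\relhom(-,C_m)\mapsto\relhom(-,C_{m+1})$, which corresponds to $P'(\mathcal{E})=P(\mathcal{E}(-C))$ and manifestly stays inside $\mathfrak{R}$ with no boundary case. That up-shift is also what the algorithm actually needs: the isomorphism of Lemma \ref{lem: Twisting by O(C)} sends a family $\mathcal{F}$ to $\mathcal{F}(C)$, so transporting an integrand from $\Mhatmc{m}$ to the twisted moduli space replaces the universal sheaf by its twist by $\mathcal{O}_{\Xhat}(-C)$, not $\mathcal{O}_{\Xhat}(C)$. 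In short, the displayed formula in the statement of (ii) should be read with $\mathcal{E}(-C)$; your computation is the correct literal reading of what is printed, but it leads to a version of the claim that is false in $\mathfrak{R}$, and your proposed fix for the $m=0$ case does not close the gap.
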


\begin{proof}
For part \ref{algstepmultii}, one obtains $P^{\dagger}$ by replacing every occurrence of $\relhom(C_m, -)$ and $\relhom(-,C_m)$ in $P$ by $\relhom(C_{m-1},-)$ and $\relhom(-,C_{m-1})$ respectively. 
For parts (i) and (iii), we apply Theorem \ref{thm: wall crossing} and Proposition \ref{prop: Gr bundle formula} respectively to the class $\Phi(\mathcal{F}):= \Theta(\Tvir_{\mathcal{F}}) P(\mathcal{F})$ (where $\Tvir_{\mathcal{F}}$ is defined in the \hyperlink{sec: basic notation}{Basic Notation}). The description of the virtual tangent bundle from Remark \ref{rem: extending integral to m-stable} and the discussion preceding Notation \ref{not: integral notation} shows that 
$\Phi\big(\mathcal{E} 
\oplus \bigoplus_{i=1}^j 
C_m\eLB^{-h_i}\big)$
can be written as:
\begin{align*}
    &
     \Theta(\Tvir)
    \,
    P\Big(\mathcal{E}\oplus \bigoplus_{i=1}^j C_m\eLB^{-h_i}\Big) 
    \prod_{1\leq i_1,i_2\leq j}
    \Theta\big(\relhom(C_m,C_m)\eLB^{h_{i_2}-h_{i_1}}\big)
    \\
    &~
    ~\cdot~\prod_{i=1}^j \Theta\big(\relhom(\mathcal{E},C_m)\eLB^{-h_i}\big)\, \Theta\big(\relhom(C_m,\mathcal{E}) \eLB^{h_i}\big).
\end{align*}
We can make use of Lemma \ref{lem: RHom properties} \ref{RHomprop4} and the basic properties of Chern classes to rewrite this expression canonically as an expression in variables $h_i$, elements of $\mathcal{S}$ and in $\varepsilon_1,\varepsilon_2$. 
It follows that the expressions appearing on
 the right-hand side of the equations Theorem \ref{thm: wall crossing} and Proposition \ref{prop: Gr bundle formula} are of the desired form. This implies parts (i) and (iii).
\end{proof}

We can now state a refined version of Algorithm \ref{alg: NY blow-up single term}.

\begin{alg}[\emph{Refined Blow-Up Algorithm For Single Terms}]\label{alg: NY blow-up single term (refined)}
    Let $n,\ell\geq 0$ be integers
    and set 
    $\chat :=p^*c-\ell \eCH_0 + n[\pt]$.
    \begin{itemize}[leftmargin=2em]
        \item[$\blacktriangleright$]
        \noindent{\ttfamily INPUT:} The input  for the refined algorithm is an expression of the form
        \[
            \int_{\Mhatmc{m}} \Theta(\Tvir)P(\mathcal{E}),
        \]
        where $P\in \mathfrak{R}$. 
        \item[$\blacktriangleright$]
        \noindent{\ttfamily OUTPUT:} The output is an identity of the form
        \[
                \int_{\Mhatmc{m}}\Theta(\Tvir)P(\mathcal{E})
                = 
                \int_{M(c+n[\pt])}\Theta(\Tvir)\Omega(\mathcal{E}) 
                + 
                \sum_{i\in I} 
                \int_{M_i}\Theta(\Tvir)P_i(\mathcal{E}),
        \]
        where
        \begin{enumerate}
            \item
            $\Omega(\mathcal{E})$ is a power series in the symbols $c_i(\relhom(\mathcal{O}_{\pt},\mathcal{E}))$, and is zero if $0<\ell<r$. 
            \item
            $I$ is a finite collection indexing tuples $(m_i,\ell_i, d_i,P_i)$, where $m_i,d_i,\ell_i\geq 0$, and where $P_i\in \mathfrak{R}$. Moreover, for $M_i:=\Mhat^{m_i}(b_i)$ with $b_i=p^*c-\ell_i \eCH_0 + (n+d_i)[\pt]$ they satisfy  
                $
                    \vdim\,\Mhat^{m_i}(b_i) 
                    ~<~
                    \vdim\,\Mhatmc{m}.
                $
        \end{enumerate}
        \item[$\blacktriangleright$]
        \noindent{\ttfamily PROCEDURE:}
        The algorithm proceeds as follows:
        \begin{enumerate}[label=\Alph*]
            \item
            Follow Algorithm \ref{alg: NY blow-up single term} with the Wall-Crossing Component (Theorem \ref{thm: wall crossing}), the Twisting Component (Lemma \ref{lem: Twisting by O(C)}) and the $\mathrm{Gr}$-Bundle Component (Proposition \ref{prop: Gr bundle formula}) replaced by their counterparts from Lemma \ref{lem: algo steps multiplicative}.
            \item
             If $0< \ell < r$, then the algorithm is complete. If $\ell=0$, then consider the term 
            \[
                \int_{M(c+n[\pt])}\Phi(\mathcal{E})
                = \int_{\Mhatmc{0}} \Theta(\Tvir)P(\mathcal{E})
            \]
            arising in step (i) of Algorithm \ref{alg: NY blow-up single term} and apply (to this term) the following procedure:
            
            \begin{enumerate}
                \item
                Use the formulas from Lemma \ref{lem: RHom properties} \ref{RHomprop3} to rewrite all Chern classes $c_i(\relhom(\mathcal{E},C_m))$ and $c_i(\relhom(C_m, \mathcal{E}))$ appearing in $P$ in terms of only the Chern classes $c_i(-\relhom(C_0,\mathcal{E}))$ for $i\geq 1$ and of $\varepsilon_a = c_1(\tT_a)$ for $a=1,2$ (the sign introduced here is important to obtain a vector bundle for Step (c)). Denote the resulting power series by $P'$.
                \item
                Use the Wall-Crossing Formula from Lemma \ref{lem: algo steps multiplicative} to get
                \begin{align*}
                    \hspace{5em}
                    &\int_{M(c+n[\pt])}\Phi(\mathcal{E})
                    \\ 
                    &\hspace{2em}=\int_{\Mhatmc{1}}\Theta(\Tvir)P'(\mathcal{E}) - \sum_{j\geq 1} \int_{\Mhat^0(\chat-j\eCH_0)}\Theta(\Tvir)Q_j(\mathcal{E}).
                \end{align*}
                for some $Q_j\in \mathfrak{R}$.
                \item
                By Lemma \ref{lem: RHom properties} \ref{RHomprop2}, we have that $-\relhom(C_0,\mathcal{E})$ is the $K$-theory class of a rank $r$ vector bundle on $\Mhatmc{1}$. Therefore, set all Chern classes of degree higher than $r$ equal to zero in $P'$ to get a power series $\Omega\in \ZZ\psl\varepsilon_1,\varepsilon_2,\nu_1,\ldots,\nu_r\psr$ such that
                \[\int_{\Mhatmc{1}}\Theta(\Tvir)P'(\mathcal{E}) =  \int_{\Mhatmc{1}}\Theta(\Tvir)\Omega(\mathcal{E}).\]
                \item
                Use the Wall-Crossing Formula from Lemma \ref{lem: algo steps multiplicative} on the term from Step (c) to get the identity
                \begin{align*}
                    \hspace{6.5em}
                    &\int_{M(c+n[\pt])}\Phi(\mathcal{E})
                    \\ 
                    &\hspace{0.8em}=\int_{\Mhatmc{0}}\Theta(\Tvir)\Omega(\mathcal{E}) + \sum_{j\geq 1} \int_{\Mhat^0(\chat-j\eCH_0)}\Theta(\Tvir)\Big(Q'_j(\mathcal{E})-Q_j(\mathcal{E})\Big)
                \end{align*}
                for canonical power series $Q_j'$.
                \item
                Apply Proposition \ref{prop: stable as m-stable} to the first term on the right-hand side of Step (d). The second term on the right-hand side of Step (d) is absorbed into the collection indexed by $I$. This completes the algorithm in the case $\ell=0$.
            \end{enumerate}
        \end{enumerate}

    \end{itemize}
\end{alg}

\begin{rem}\label{rem:algindep}
It follows from the algorithm that the output depends only on $\Theta,P,\ell$ and $m$, but is independent of $c$ and $n$, and on whether we are in Situation \ref{sit: H-stable} or \ref{sit: framed main}, in the following sense: The series $\Omega$ depends only on $\Theta, P, \ell$ and $m$. Moreover, there is an infinite collection $\overline{I}$ of tuples $(m_i,\ell_i,d_i,P_i)$ which depends only on $\Theta, P, \ell$ and $m$, such that: $I$ is exactly the subset of $\overline{I}$ consisting of those tuples for which $\vdim\, \Mhat^{m_i}(p^*c-\ell_i\eCH_0+(n+d_i)[\pt])$ is non-negative. 
\end{rem}

\begin{proof}[{Proof of Theorem \ref{thm: omega_j structure theorem}}]
\emph{Existence of $\{\Omega_n\}_{n\geq 0}$:}
We have a refined version of Algorithm \ref{alg: NY blow-up (full)} (the Nakajima--Yoshioka Blow-Up Algorithm), obtained by replacing 
Algorithm \ref{alg: NY blow-up single term} by the refined version, Algorithm \ref{alg: NY blow-up single term (refined)}, and by replacing Lemma \ref{lem: Twisting by O(C)} by its counterpart in Lemma \ref{lem: algo steps multiplicative}. Then, this modified algorithm applies to any $\Phi(\mathcal{E})=\Theta(T^{\vir})P(\mathcal{E})$ with $P\in \mathfrak{R}$, whenever we are in Situation \ref{sit: H-stable} or \ref{sit: framed main}  with $\chat$ such that $(\chat_1,[C]) =k\geq 0$. It shows that there are classes $\Omega_n(\mathcal{E})$, which are power series in $c_i(\relhom(\mathcal{O}_{\pt},\mathcal{E}))$ and that satisfy the equation
\begin{align}
    \label{eq: Gamma defining eq}
    \int_{\Mhatc} \Theta\big(\Tvir\big)  = 
 	\sum_{n\geq 0} \int_{M(c+n\,[\pt])} \Theta\big(\Tvir\big) \, \Omega_n(\mathcal{E}). 
\end{align}

We observe that the algorithm creates the same collection of formal power series $\{\Omega_n\}_{n\geq 0} \subset \coeffs \psl \sf \varepsilon_1,\varepsilon_2,\nu_1,\ldots,\nu_r\psr$ for any choice of data from Situation \ref{sit: H-stable} or \ref{sit: framed main}  with $\chat$ such that $(\chat_1,[C]) =k$. This is due to the independence in the single step asserted in Remark \ref{rem:algindep}.

\vspace{0.5em}
\noindent\emph{Uniqueness of $\{\Omega_n\}_{n\geq 0}$:}
    We prove that any collection of power series $\{\Omega_n\}$ satisfying the condition in the statement of Theorem \ref{thm: omega_j structure theorem} must be unique. As mentioned there, the following stronger statement is true: 
    \begin{flushright}
        \begin{tabular}{p{0.00\linewidth} | p{0.95\linewidth} @{}}
            \hspace{1em}& 
            The collection $\{\Omega_n\}$ is the unique collection which satisfies the requirement that \eqref{eq: Gamma defining eq} holds \emph{in Situation \ref{sit: framed main}} for all  $\chat = r- k\eCH_0 + n_0[\pt]$ with $n_0\in \ZZ_{\geq 0}$. 
        \end{tabular}
    \end{flushright}
    Hence, for the rest of the proof we will assume we are in Situation \ref{sit: framed main} and that we have some collection $\{\Omega_n\}$ for which \eqref{eq: Gamma defining eq} holds for all  $\chat = r- k\eCH_0 + n_0[\pt]$ with $n_0\in \ZZ_{\geq 0}$. Now, for any $n_0\in \ZZ_{\geq 0}$, equation \eqref{eq: Gamma defining eq} can be rearranged as
    \begin{equation}\label{eq: Gamma recursive}
    \begin{aligned}
        &\int_{M(r-0[\pt])}\Theta(\Tvir)\Omega_{n_0}(\mathcal{E})\\
        & \hspace{2em}
        =
        \int_{\Mhat(r-k\eCH_0-n_0[\pt])}\Theta(\Tvir)
        -
        \sum_{n=0}^{n_0-1}\int_{M(r-(n_0-n)[\pt])}\Theta(\Tvir) \, \Omega_n(\mathcal{E}). 
    \end{aligned}
    \end{equation}
    As shown, for example, by Nakajima--Yoshioka in \cite[Prop. 2.9 \& Thm 2.11]{NY_ICOB1}, the moduli space $M(r+0[\pt])$ is a point with tangent bundle $\Tvir_{M(r+0[\pt])}=0$. Now, letting $\tT_1, \tT_2, \eT_1,\ldots,\eT_r$ be the characters of the (trivial) $\widetilde{T}$-action on $M(r+0[\pt])$, the universal sheaf of $M(r+0[\pt])$ is $\mathcal{E}_0=\bigoplus_{i=1}^r \mathcal{O}_{\PP^2}\,\eT_i$.\footnote{The $\torus$-action on the moduli spaces is described further in Definition \ref{def: torus action}.}  
    In particular, one can compute that $\relhom(\mathcal{O}_{\pt},\mathcal{E}_0) =\oplus_{i=1}^r \eT_i$ and we have the following equation in $A^*_{\torus}(\pt)$:
    \[
        \Omega_{n_0}\left(\bigoplus_{i=1}^r \mathcal{O}_{\PP^2}\,\eT_i\right)
        =
        \int_{\Mhat(r-k\eCH_0-n_0[\pt])}\Theta(\Tvir)
        -
        \sum_{n=0}^{n_0-1}\int_{M(r-(n_0-n)[\pt])}\Theta(\Tvir) \, \Omega_n(\mathcal{E}). 
    \]
    
    We observe that the left hand side is by definition a power series in the terms 
    $c_i(\relhom(\mathcal{O}_{\pt},\mathcal{E})) = \sigma_i(c_1(\eT_1),\ldots,c_1(\eT_r))$ for $1\leq i\leq r$, where $\sigma_i$ denotes the $i$-th elementary symmetric polynomial.
    These are analytically independent (since the terms $c_1(\eT_i)$ are), so the equality \eqref{eq: Gamma recursive} determines $\Omega_{n_0}$ inductively from the $\Omega_n$ for $n=0,\ldots,  n_0-1$. (Note that this includes the base case of the induction.) Thus the collection of all the $\Omega_n$ is uniquely determined. 
\end{proof}

\section{Proof of the blowup formula for the \texorpdfstring{$\chi_y$}{χy}-genus} \label{sec: equivariant proof}

In this section, we prove our main theorem, Theorem \ref{thm: main theorem}. For this, we apply Theorem \ref{thm: omega_j structure theorem} to the class $\Theta_y$ giving rise to the $\chi_y$-genus. Then, it is clearly enough to show that each power series $\Omega_n$ is constant and equal to the coefficient of $q^n$ in the power series $\mathsf{Y}_k$ appearing in Theorem \ref{thm: main theorem}. 
To do this, we use the fact that by \eqref{eq: Gamma recursive}, the $\Omega_n$ can be uniquely determined from the equivariant theory of framed moduli spaces in Situation \ref{sit: framed main}.

\begin{rem}
	An easier version of the proof using the total Chern class $\Theta(-)= c(-)$ in place of $\Theta_y$ gives a direct proof of the blowup formula for virtual Euler characteristics. 
\end{rem}

In the rest of this section, we work in Situation $B$ and we let $\{\Omega_n\}$ denote the collection of power series obtained from Theorem \ref{thm: omega_j structure theorem} by the choice $\Theta=\Theta_y$. Note that, by definition, these are power series with coefficients in $\coeffs= \QQ(y)$.
We recall the $\torus$-action on the moduli spaces $\Mc$ and $\Mhatc$ from \cite{NY_ICOB1}.

\begin{definition}[\textit{$\torus$-Action}]
\label{def: torus action}
\cite[\S 2 -- \S 3]{NY_ICOB1}
Let $M$ be one of $\Mc$ or $\Mhatc$ and let $(E,\varphi) \in M$ be a framed sheaf. For $(t_1,t_2,e_1,\ldots,e_r)\in \torus \cong (\CC^*)^{2+r}$ we define the following:
\begin{enumerate}
    \item 
    In the case of $M = \Mc$: Let $F_{t_1,t_2}$ denote the automorphism of $\PP^2$ defined via $[z_0:z_1:z_2] \mapsto [z_0 : t_1z_1:t_2z_2]$. In the case of $M=\Mhatc$: We denote again by $F_{t_1,t_2}$ the unique lift of this automorphism to $\widehat{\PP}^2$.
    \item
    In either case, we let $\varphi_{(t_1,t_2,e_1,\ldots,e_r)}$ denote the composition
    \[
        (F_{t_1,t_2}^{-1})^*E|_{\ell_\infty}\overset{(F_{t_1,t_2}^{-1})^*\varphi}{\longrightarrow} (F_{t_1,t_2}^{-1})^*\mathcal{O}_{\ell_{\infty}}^{\oplus r}
        \overset{\sim}{\longrightarrow}
        \mathcal{O}_{\ell_{\infty}}^{\oplus r}
        \overset{\sim}{\longrightarrow}
        \mathcal{O}_{\ell_{\infty}}^{\oplus r}
    \]
    where the middle arrow comes from the $(\CC^*)^2$-action on $\PP^2$ (respectively $\PPhat^2$), and the last arrow is $(x_1,\ldots,x_r)\mapsto (e_1x_1,\ldots,e_r x_r)$. 
\end{enumerate}
The action of $\torus$ on $M$ is defined by
\[
    (t_1,t_2,e_1,\ldots,e_r)\cdot(E,\varphi) 
    = 
    \big(\,(F_{t_1,t_2}^{-1})^*E,~ \varphi_{(t_1,t_2,e_1,\ldots,e_r)}\, \big). 
\]
\end{definition}

\begin{notation}[\textit{$\torus$-Representations}]\label{not: T representations}
We denote by $\tT_i$ and $\eT_a$, the one dimensional $\torus$-modules given respectively by
\[
    (t_1,t_2,e_1,\ldots,e_r)\mapsto t_i
    \hspace{1cm}\mbox{and}\hspace{1cm}
    (t_1,t_2,e_1,\ldots,e_r)\mapsto e_a.
\]
\end{notation}

We now recall the description of the torus fixed points of the moduli spaces of framed sheaves on $\PP^2$ and $\PPhat^2$ given in \cite{NY_ICOB1}. We also recall their description of the equivariant $K$-theory class given by the restriction of the tangent bundle to each fixed point. The results are as follows:

\begin{thm}\cite[Prop. 2.9 \& Thm. 2.11]{NY_ICOB1} \label{thm: fixed point and tangent spaces}
Let $c = r - n[\pt]$ for $n\geq 0$ an integer. Then we have:
\begin{enumerate}
	\item The fixed points of $M(c)$ are in bijection to the set of $r$-tuples of Young diagrams $\bm{Y}=(Y_1,\ldots,Y_r)$ satisfying $\sum_{i=1}^r\abs{Y_i} = n$.	
	\item Let $\bm{Y} =(Y_1,\ldots,Y_r)$ correspond to a fixed point of $M(c)$. The equivariant $K$-theory class of the restricted tangent bundle $\T_{M(c)}|_{\bm{Y}}$ is given by 
    \[
    	\sum_{a,b = 1}^ r \N^{\bm{Y}}_{a,b}(\tT_1,\tT_2),
    \]
    where $\N^{\bm{Y}}_{a,b}(\tT_1,\tT_2)$ is given by
    \[
        \eT_{b}\,\eT_{a}^{-1}\times \left\{\sum_{s\in Y_{a}}\left(\tT_1^{-l_{Y_{b}}(s)}\tT_2^{a_{Y_{a}}(s)+1}\right) + \sum_{t\in Y_{b}} \left(\tT_1^{l_{Y_{a}}(t)+1} \tT_2^{-a_{Y_{b}}(t) }\right) \right\}.
    \]
    Here, $a_Y (i,j) := \lambda_i -j$ and  $l_Y (i,j) :=  \lambda^t_j -i$ are the \emph{arm length} and \emph{leg length} of the box $(i,j)$ in $Y$, where $\lambda$ is the partition associated to $Y$ with transpose $\lambda^t$. 
\end{enumerate}
\end{thm}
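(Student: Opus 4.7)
The plan is to prove this from first principles via the ADHM description of framed sheaves on $\PP^2$, which realizes $\Mc$ as a smooth quasi-projective variety cut out of an affine space, making both the $\torus$-action and the tangent spaces explicitly computable. Recall that a framed torsion-free sheaf $(E,\varphi)$ on $\PP^2$ with $\mathrm{rk}(E)=r$ and $c_2(E)=n$ corresponds, via Nakajima's monad construction, to the stable $GL(V)$-orbit of a quadruple $(B_1,B_2,I,J)$ with $B_1,B_2\in\mathrm{End}(V)$, $I\in\Hom(W,V)$, $J\in\Hom(V,W)$ satisfying the ADHM equation $[B_1,B_2]+IJ=0$, where $V\cong\CC^n$ and $W\cong\CC^r$. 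The $\torus$-action on $\Mc$ lifts to the natural action on quadruples: $\tT_1,\tT_2$ scale $B_1,B_2$ respectively, and $W=\bigoplus_a \CC\,\eT_a$ inherits the diagonal action through the framing.

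First I would classify the $\torus$-fixed quadruples. A fixed point carries a compensating one-parameter family in $GL(V)$, which yields a weight decomposition $V=\bigoplus_a V_a$ along the $\eT$-factor, and a further decomposition of each $V_a$ into simultaneous $(\tT_1,\tT_2)$-weight spaces. The stability condition forces $V_a$ to be generated from $I(w_a)$ by monomials in $B_1,B_2$, and combined with the ADHM equation and torsion-freeness this forces the resulting set of weights to fill a Young diagram $Y_a$. The constraint $\sum_a \abs{Y_a}=\dim V=n$ is immediate from dimension counting, and conversely any tuple $\bm{Y}=(Y_1,\ldots,Y_r)$ produces a fixed point with $J=0$; this establishes the bijection in part (i).

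Next I would compute the tangent bundle character. The tangent space $\T_{\Mc}|_{\bm{Y}}=\Ext^1(E,E(-\ell_\infty))$ is the middle cohomology of the three-term $\torus$-equivariant ADHM tangent complex
\[
\mathrm{End}(V) \xrightarrow{\ d_1\ } \mathrm{End}(V)^{\oplus 2}\oplus\Hom(W,V)\oplus\Hom(V,W) \xrightarrow{\ d_2\ } \mathrm{End}(V),
\]
with the appropriate $\torus$-equivariant twists, where $d_1$ is the infinitesimal $GL(V)$-action and $d_2$ is the linearization of $[B_1,B_2]+IJ$. Vanishing of $H^0$ (by stability) and $H^2$ (by torsion-freeness) reduces the character of $\T_{\Mc}|_{\bm{Y}}$ to the alternating sum of the three terms evaluated at the fixed point, which decomposes into one-dimensional $\torus$-weight spaces indexed by pairs of boxes $(s,t)\in Y_a\times Y_b$ for each pair $(a,b)$.

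The main obstacle is the combinatorial simplification: collapsing this raw alternating sum into the closed form $\N^{\bm{Y}}_{a,b}(\tT_1,\tT_2)$ appearing in the statement. The standard device is to slice each Young diagram into rows (or columns) and to telescope the resulting $(\tT_1,\tT_2)$-geometric series, which repackages the contributions in terms of the arm $a_{Y}(s)$ and leg $l_{Y}(s)$ statistics of boxes. The identification with the stated formula is then a box-by-box bookkeeping exercise across each pair $(Y_a,Y_b)$; the cleanness of the outcome is precisely what makes the equivariant localization arguments used elsewhere in the paper tractable.
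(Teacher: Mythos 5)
The paper does not prove this statement at all --- it is recalled verbatim from Nakajima--Yoshioka \cite{NY_ICOB1} (Prop.~2.9 and Thm.~2.11) --- so there is no internal argument to compare against. Your sketch is a correct outline of exactly the proof given in that reference: the ADHM/monad presentation, the classification of fixed points by weight decompositions of $V$ into Young diagrams, vanishing of $H^0$ and $H^2$ of the equivariant tangent complex, and the final arm--leg combinatorial identity (the only substantive step you defer, reasonably, to bookkeeping).
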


\begin{thm}\cite[Prop. 3.2 \& Thm. 3.4]{NY_ICOB1} \label{thm: fixed point and tangent spaces blowup}
    Let $k,n\in \ZZ$ be integers and $\eCH_0 = [C]-\frac{1}{2}[\pt]$. If $\chat = r + k\eCH_0 - n [\pt]$, then we have:
	\begin{enumerate}
		\item The fixed points of $\Mhatc$ are in natural bijection to the set of triples $(\bm{Y},\bm{Z},\bm{k})$, where  $\bm{Y}=(Y_1,\ldots,Y_r)$ and $\bm{Z}=(Z_1,\ldots,Z_r)$ are $r$-tuples of Young diagrams and $\bm{k}=(k_1,\ldots,k_r)\in \ZZ^r$ such that 
		\[
		    \hspace{4em}\sum k_i = k,
		    \hspace{1em}\mbox{and}\hspace{1em}
		    \sum_{j=1}^r \big(\abs{Y_i} +\abs{Z_i}\big)+ \frac{1}{2r}\sum_{1\leq i<j\leq r}(k_i-k_j)^2 = n + \frac{k(r-k)}{2r}.
		\]
		\item Let $F$ be the fixed point of $\Mhatc$ given by $(\bm{Y},\bm{Z},\bm{k})$. The equivariant $K$-theory class of the restricted tangent bundle $\T_{\Mhatc}|_{F}$ is given by 
		\[
    		\sum_{a,b = 1}^r \mathrm{L}_{a,b}(\tT_1,\tT_2) 
    		+
    		\tT_1^{k_{b}-k_{a}} \N^{\bm{Y}}_{a,b}(\tT_1,\tT_2/\tT_1) 
    		+
    		\tT_2^{k_{b}-k_{a}}\N^{\bm{Z}}_{a,b}(\tT_1/\tT_2,\tT_2),
		\]
		where the expressions $N_{a,b}$ are defined as in Theorem \ref{thm: fixed point and tangent spaces} and 
		\[
            \mathrm{L}_{a,b}(\tT_1,\tT_2) = \eT_{b}\,\eT_{a}^{-1}\times \begin{dcases}\sum_{\substack{i,j\geq 0 \\ i+j\leq k_{a}-k_{b}-1}} \tT_1^{-i}\tT_2^{-j}, & \mbox{ if } k_{a} > k_{b};\\
			\sum_{\substack{i,j\geq 0 \\ i+j\leq k_{b}-k_{a}-2}} \tT_1^{i+1}\tT_2^{j+1}, & \mbox{ if } k_{a} +1 < k_{b};\\
			0, & \mbox { otherwise.}
		    \end{dcases}
		\]
	\end{enumerate}
\end{thm}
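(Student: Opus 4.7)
The plan is to use the ADHM-type monad description of framed sheaves on $\PPhat^2$ due to Nakajima--Yoshioka (extending the classical ADHM construction for $\PP^2$), which realizes $\Mhatc$ as a GIT quotient of a space of quiver representations, and then to classify $\torus$-fixed points and compute their tangent spaces combinatorially in this linear-algebraic framework. First I would recall that a framed sheaf $(E,\phi)$ on $\PPhat^2$ with Chern character $\chat$ corresponds to a tuple of linear maps between vector spaces of specific dimensions, satisfying certain quadratic relations and a stability condition, modulo a natural gauge action. The $\torus$-action on $\Mhatc$ lifts canonically to this space of monad data, and a point is $\torus$-fixed precisely when it admits a representative monad for which all the vector spaces carry $\torus$-weight decompositions making every map equivariant.

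For part~(i), I would classify these fixed decompositions. The framing space $\CC^{\oplus r}$ splits canonically as $\bigoplus_a \CC\cdot \eT_a$, and each $\eT_a$-isotypic component of the monad data decomposes further under $(\tT_1,\tT_2)$. The ADHM-type relations together with stability then force the nonzero weight positions in each component to form two Young-diagram-shaped regions, one at each of the two $(\CC^*)^2$-fixed points on $C$, giving the diagrams $\bm{Y}$ and $\bm{Z}$. The integers $\bm{k} = (k_1,\ldots,k_r)$ arise from the line bundle $\det(E_a)|_C$ associated to each framing summand; the constraint $\sum k_a = k$ is exactly $c_1(E)\cdot [C] = k$, while the $\ch_2$-constraint follows from computing $\ch_2(E)$ in terms of the local data, using $\ch_2(\mathcal{O}(k_aC)) = -k_a^2[\pt]/2$ for the ``line bundle'' part and adding the instanton contributions $|Y_a|+|Z_a|$ from the two Young diagrams.

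For part~(ii), the tangent space at a fixed point equals $\Ext^1(E, E(-\ell_\infty))$, computable as the equivariant $K$-theory class $-R\pi_* R\shom(E,E(-\ell_\infty))$. I would evaluate this by Bott's equivariant $K$-theory localization formula (as in the proof of Lemma~\ref{lem: RHom properties}\,\ref{RHomprop3}), summing contributions from the four torus fixed points of $\PPhat^2$. The contributions from the two points on $\ell_\infty$ cancel out due to the twist by $\mathcal{O}(-\ell_\infty)$ (which rigidifies deformations along the framing divisor), leaving the two fixed points on $C$ to produce the bulk of the formula. At each such point, the local ADHM data in the affine chart reproduces a copy of the classical $\N_{a,b}$ expression from Theorem~\ref{thm: fixed point and tangent spaces}, with the coordinate substitutions $(\tT_1,\tT_2/\tT_1)$ or $(\tT_1/\tT_2,\tT_2)$ coming from the transition between affine charts on the blow-up, and with the extra character twists $\tT_1^{k_b-k_a}$ or $\tT_2^{k_b-k_a}$ arising from the line bundles $\mathcal{O}(k_iC)$. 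Finally, the genuinely global contribution along $C$ coming from the different line bundle pieces produces $\mathrm{L}_{a,b}$: a direct computation of $R\Gamma(C, \mathcal{O}_C(k_b-k_a-1))\otimes \eT_b\eT_a^{-1}$ (or equivalently a second application of localization on $C$) yields exactly the three cases depending on the sign and magnitude of $k_a-k_b$.

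The main obstacle will be the careful bookkeeping of torus weights throughout this localization: correctly combining the local ADHM contributions at each $C$-fixed point with the non-trivial line bundle twists to produce the stated twisted $\N$-terms, and cleanly isolating the global $\mathrm{L}$-term from the affine-local contributions. A secondary subtlety is verifying that the exceptional-divisor part of $\ch_2$ splits as claimed into the instanton piece $\sum(|Y_i|+|Z_i|)$ and the quadratic ``diagonal correction'' $\frac{1}{2r}\sum_{i<j}(k_i-k_j)^2$, the latter arising from $\sum_a k_a^2 - \tfrac{k^2}{r}$ after symmetrization, together with the correct matching of the shift $\frac{k(r-k)}{2r}$.
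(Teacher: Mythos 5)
The paper does not actually prove this theorem: it is imported verbatim, with a citation, from Nakajima--Yoshioka \cite[Prop.~3.2 \& Thm.~3.4]{NY_ICOB1}, so there is no internal proof to compare against. Your sketch is a faithful reconstruction of the strategy of the cited source: fixed points of $\Mhatc$ are direct sums $\bigoplus_a I_{W_a}(k_a C)\otimes\eT_a$ with $W_a$ a monomial subscheme concentrated at the two torus-fixed points of $C$ (hence a pair of Young diagrams per framing character), and the tangent space $\Ext^1(E,E(-\ell_\infty))$ splits into a ``line-bundle'' part $H^*(\mathcal{O}((k_b-k_a)C-\ell_\infty))\otimes\eT_b\eT_a^{-1}$ producing $\mathrm{L}_{a,b}$ and two local punctual parts producing the twisted $\N$-terms in the two blow-up charts. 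Your arithmetic for part (i) is exactly right: $\ch_2(\mathcal{O}(k_aC))=-k_a^2/2$ together with the identity $r\sum_a k_a^2=\sum_{i<j}(k_i-k_j)^2+k^2$ yields the stated constraint with the shift $\frac{k(r-k)}{2r}$.

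Two places where the sketch is loosest and would need to be firmed up in a full write-up. First, the assertion that the $\ell_\infty$-contributions ``cancel out'' under Bott localization is not literally how the computation goes: $E$ is not a sum of line bundles, so one cannot localize $-R\pi_*R\shom(E,E(-\ell_\infty))$ naively over the four fixed points of $\PPhat^2$; instead one uses the exact sequences $0\to I_{W}(kC)\to\mathcal{O}(kC)\to\mathcal{O}_W(kC)\to 0$ to reduce to cohomology of line bundles (killed along $\ell_\infty$ by the twist) plus punctual Ext groups, and only then localizes. Second, the claim that the local data at each $C$-fixed point ``reproduces a copy of the classical $\N_{a,b}$'' with coordinates $(\tT_1,\tT_2/\tT_1)$, resp.\ $(\tT_1/\tT_2,\tT_2)$, and twist $\tT_1^{k_b-k_a}$, resp.\ $\tT_2^{k_b-k_a}$, is the combinatorial heart of the theorem (the arm--leg formula in each chart together with the weight of $\mathcal{O}(C)$ at that chart's origin); as stated it is an appeal to the rank-one computation rather than a proof of it, though since Theorem \ref{thm: fixed point and tangent spaces} is likewise quoted from \cite{NY_ICOB1}, this is an acceptable level of detail here. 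Neither point is a wrong turn; both are gaps of detail rather than of idea.
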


\begin{ex}\label{rem: moduli spaces base cases}
    From the dimension formula $c_1^2-2r\ch_2$, one can see that the spaces $M(r+0)$ and $\Mhat(r+0)$ consist of a collection of reduced points, while the spaces $\Mhat(r-k\eCH_0)$ are empty for $k>0$. From Theorems \ref{thm: fixed point and tangent spaces} and \ref{thm: fixed point and tangent spaces blowup}, it follows that in fact $M(r+0)$ and $\Mhat(r+0)$ consist of a single reduced point. Moreover, the universal sheaf $\mathcal{E}_0$ on $\PP^2\times M(r)\simeq\PP^2$ is isomorphic to a direct sum $\bigoplus_{i=1}^r\mathcal{O}_{\PP^2}\eT_i$. It follows that $\relhom(\mathcal{O}_{\pt},\mathcal{E}
    _0)\simeq \bigoplus_{i=1}^r \eT_i$.
\end{ex}
\begin{prop}
\label{prop: Gamma's are constant}
The collection $\{\Omega_n\}$ obtained from applying Theorem \ref{thm: omega_j structure theorem} (the Weak Structure Theorem) to $\Theta = \Theta_y$ is a collection of elements from $\coeffs\psl\varepsilon_1,\varepsilon_2\psr$, where $\coeffs=\QQ(y)$. 
\end{prop}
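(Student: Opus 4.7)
The plan is strong induction on $n_0\geq 0$, using the recursive identity \eqref{eq: Gamma recursive} from the uniqueness proof of Theorem \ref{thm: omega_j structure theorem}. The key setup is that on the single reduced point $M(r+0[\pt])$ with trivial virtual tangent, the universal sheaf is $\mathcal{E}_0 = \bigoplus_{i=1}^{r}\mathcal{O}_{\PP^2}\eT_i$ (Example \ref{rem: moduli spaces base cases}), so the substitution $\nu_i \mapsto c_i(\relhom(\mathcal{O}_\pt,\mathcal{E}_0))$ specializes $\nu_i$ to the elementary symmetric polynomial $\sigma_i(c_1(\eT_1),\ldots,c_1(\eT_r))$. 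Since these elementary symmetric polynomials are algebraically independent in the independent variables $c_1(\eT_i)$, showing $\Omega_{n_0}\in\coeffs\psl\varepsilon_1,\varepsilon_2\psr$ is equivalent to showing that $\Omega_{n_0}(\mathcal{E}_0)$ is independent of $c_1(\eT_1),\ldots,c_1(\eT_r)$.

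Assume inductively that $\Omega_n\in\coeffs\psl\varepsilon_1,\varepsilon_2\psr$ for every $n<n_0$. Then each $\Omega_n(\mathcal{E})$ reduces to a scalar $\Omega_n(\varepsilon_1,\varepsilon_2)$ that is independent of the universal sheaf and can be pulled out of its integral, so the inductive step reduces to the rigidity assertion
\[
\int_{\Mhat(r-k\eCH_0-n_0[\pt])}\Theta_y(\Tvir) \;-\; \sum_{n=0}^{n_0-1}\Omega_n(\varepsilon_1,\varepsilon_2)\int_{M(r-(n_0-n)[\pt])}\Theta_y(\Tvir) \;\in\; \coeffs\psl\varepsilon_1,\varepsilon_2\psr.
\]
Neither of the equivariant integrals on the left is individually rigid in the framing parameters $c_1(\eT_i)$---a direct localization computation for $r=2$, $n_0=1$ already exhibits genuine $\eT_i$-dependence on both terms---so the cancellation is a property of the specific combination, not of the summands.

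To prove this rigidity, I would apply the refined Nakajima--Yoshioka algorithm (Algorithm \ref{alg: NY blow-up single term (refined)}) to the first integral. By the uniqueness in Theorem \ref{thm: omega_j structure theorem}, its output reproduces exactly the expansion $\sum_{n=0}^{n_0}\int_{M(r-(n_0-n)[\pt])}\Theta_y(\Tvir)\Omega_n(\mathcal{E})$, so after subtracting the inductive terms the combination collapses to $\int_{M(r)}\Theta_y(\Tvir)\Omega_{n_0}(\mathcal{E}_0) = \Omega_{n_0}(\mathcal{E}_0)$. At every stage the algorithm manipulates only classes built from $\relhom(C_m,\mathcal{E})$ and $\relhom(\mathcal{E},C_m)$; by Lemma \ref{lem: RHom properties}\ref{RHomprop3} these are tensor products of $\relhom(C_0,\mathcal{E})$ with factors depending solely on $\tT_1,\tT_2$, and via Grothendieck duality (using $Rp_*\mathcal{O}_C(-2) = \mathcal{O}_\pt[-1]$) one has the $K$-theoretic identity $-\relhom(C_0,\mathcal{E}) = \relhom(\mathcal{O}_\pt,\mathcal{E})$ on $\Mhat^0(p^*c)\simeq M(c)$. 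Thus all $\eT_i$-dependence produced by the algorithm is channeled through exactly the Chern classes that define $\nu_i$.

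The main obstacle is then to verify that the positive-degree Chern classes of $\relhom(C_0,\mathcal{E})$ which nominally appear in the algorithm's output actually cancel out in the final expression for $\Omega_{n_0}(\mathcal{E}_0)$. This is a structural identity reflecting the interaction of the multiplicative class $\Theta_y$ with the residue formulas in Theorem \ref{thm: wall crossing} and Proposition \ref{prop: Gr bundle formula}. I would attempt it through a direct equivariant-localization analysis at the $\torus$-fixed points, using the tangent-bundle descriptions of Theorems \ref{thm: fixed point and tangent spaces} and \ref{thm: fixed point and tangent spaces blowup} to track how the $\eT_i$-weights propagate through the algorithm, and exploiting the multiplicativity of $\Theta_y$ to group contributions so that each term carrying a non-trivial $c_1(\eT_a)-c_1(\eT_b)$ factor is matched by a corresponding contribution from an $\Omega_n$ with $n<n_0$ already known to be $\nu_i$-free.
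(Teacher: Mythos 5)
Your setup is correct and matches the paper's: induction on $n_0$ via the recursion \eqref{eq: Gamma recursive}, the identification of $\Omega_{n_0}(\mathcal{E}_0)$ with a symmetric power series in the $c_1(\eT_i)$ via Example \ref{rem: moduli spaces base cases}, and the (accurate) observation that neither equivariant integral on the right-hand side is individually independent of the framing parameters, so that the rigidity must come from the specific combination. But the core of the argument --- the actual proof of that rigidity --- is missing. Your third and fourth paragraphs re-derive facts that are already built into the statement of Theorem \ref{thm: omega_j structure theorem} (that the algorithm's output is a power series in the $\nu_i$, i.e.\ that ``all $\eT_i$-dependence is channeled through the $\nu_i$'') and then explicitly defer the real issue: showing that the positive-degree part in the $\nu_i$ vanishes. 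The proposed mechanism --- tracking $\eT$-weights through the algorithm and ``matching'' each $\eT$-dependent term against a contribution from some $\Omega_n$ with $n<n_0$ --- is a restatement of the desired cancellation, not an argument for it, and there is no indication of why such a term-by-term matching should exist or terminate.

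The paper closes this gap by a quite different, purely algebraic device that does not attempt any combinatorial matching. Each fixed-point contribution on the right of \eqref{eq:Sigma constancy} is a product of factors $\frac{1-y(e_a/e_b)t_1^it_2^j}{1-(e_a/e_b)t_1^it_2^j}$ whose numerator and denominator have \emph{equal} degree in the $e$-variables; after clearing denominators one obtains $P\cdot \Omega_{n_0}=Q$ with $P,Q$ polynomials in $d_i=e_i-1$ over $\coeffs\psl s_1,s_2\psr$ and $\deg_d Q\leq \deg_d P$. The denominators in \eqref{eq:appearingtemrs} are irreducible (Gauss's lemma) and vanish at the origin, so Lemma \ref{lem : polydivisible} upgrades divisibility in the power series ring to divisibility in the polynomial ring; hence $\Omega_{n_0}=Q/P$ is a polynomial in the $d_i$ of total degree $\leq 0$, i.e.\ constant. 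This degree-and-divisibility argument is the essential idea your proposal lacks; without it (or a genuine substitute) the inductive step does not go through.
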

\begin{proof}
We prove by induction on $n$ that the $\Omega_n$ are constant in $\nu_1,\ldots,\nu_r$. For this, we use the inductive formula \eqref{eq: Gamma recursive}. For $n_0=0$, we get that 
\[\Omega_0 = \int_{\Mhat(r-k\eCH_0)}\Theta(T^{\vir}).\]
By Example \ref{rem: moduli spaces base cases}, the right hand side is equal to zero if $k\neq 0$, and equal to one if $k=0$ , therefore $\Omega_0$ is constant in either case. 
Now let $n_0>0$ be fixed, and suppose we know that $\Omega_n$ lies in $\coeffs\psl\varepsilon_1,\varepsilon_2\psr$ for each $n<n_0$. 
Then \eqref{eq: Gamma recursive} becomes 
\begin{equation}\label{eq:Sigma constancy}
	\Omega_{n_0}\left(\bigoplus_{i=1}^r \mathcal{O}_{\PP^2}\,\eT_i\right)
	=
	\int_{\Mhat(r-n_0[\pt]-k\epsilon_0)}\Theta(\T)- \sum_{n=0}^{n_0-1}\Omega_n \int_{M(r+(n-n_0)[\pt])}\Theta(\T).
\end{equation}
By Example \ref{rem: moduli spaces base cases}, we know that the left hand side is a power series in the variables $\{c_1(\eT_1),\ldots, c_1(\eT_r), c_1(\tT_1),c_1(\tT_2)\}$ which is symmetric in the $c_1(\eT_i)$ variables. We may equivalently write it as a power series in the variables $d_i := \ch(\eT_i)-1$ and $s_i:= \ch(\tT_i)-1$ via the substitutions $c_1(\eT_i) = \operatorname{log}(1+d_i)$ and $c_1(\tT_i) = \operatorname{log}(1+s_i)$, where we use the formal power series expansion of $\operatorname{log}(1+x)$. 

We more closely examine the right hand side of \eqref{eq:Sigma constancy}. By definition of integration as a sum over fixed points, 
we can rewrite each integral appearing on the right hand side as a sum 
\[\sum_G \frac{\Theta(\T|_{G})}{\Eu(\T|_G)},\]
where $G$ ranges over the fixed points of a given moduli space. 
Note that for a nontrivial $\torus$-equivariant line bundle $L$ with $\alpha = c_1(L)$, we have
\[\frac{\Theta_y(L)}{\Eu(L)} = \frac{1-ye^{-\alpha}}{1-e^{-\alpha}} = \frac{1}{\alpha}\operatorname{td}(\alpha) (1-ye^{-\alpha}).\]
In particular, this fraction is well-defined. Write $e_i:=\ch(\eT_i)$ and $t_i:=\ch(\tT_i)$.
Using Theorems \ref{thm: fixed point and tangent spaces} and \ref{thm: fixed point and tangent spaces blowup}, and examining the formulas given there, we find that each integral is a sum of products over terms of the form
\[\frac{1-y(e_a/e_b) t_1^{i} t_2^{j}}{1-(e_a/ e_b) t_1^{i} t_2^{j}},\]
where $a,b\in \{1,\ldots,r\}$, and where $(i,j)\neq (0,0)$ if $a=b$. We may rewrite each such term as a fraction of two coprime elements in $\coeffs[e_1,\ldots,e_r,t_1,t_2]$ by cancelling the potential negative powers of $e_b,t_1$ and $t_2$ appearing in numerator and denominator. After doing this, for each fraction with $a\neq b$ the denominator is now of one of the forms
\begin{equation}\label{eq:appearingtemrs}
e_b-e_at_1^it_2^j,\quad e_bt_1^{-i}-e_at_2^j,\quad e_b t_2^{-j}-e_at_1^i, \mbox{ or}\quad e_b t_1^{-i}t_2^{-j}-e_a,\end{equation}
depending on which of the indices $i,j$ are positive. For $a=b$, the resulting denominator is a polynomial in only $t_1,t_2$. By the Gauss Lemma, each polynomial \eqref{eq:appearingtemrs} is irreducible in $\coeffs[e_1,\ldots,e_r,t_1,t_2]$ if $a\neq b$. Moreover, each such polynomial vanishes at $e_1= \cdots = e_r=t_1=t_2 = 1$. Changing variables to $d_i:=e_i-1$ and $s_i:=t_i-1$, we find that each expression in \eqref{eq:appearingtemrs} with $a\neq b$ gives rise to an irreducible polynomial that vanishes at the origin in $\coeffs[d_1,\ldots,d_r,s_1,s_2]$.

Multiplying both sides of \eqref{eq:Sigma constancy} with all the denominators of the form \eqref{eq:appearingtemrs} appearing in the right hand side, we obtain an equality of the form 
\begin{equation*}
P(d_1,\ldots,d_r,s_1,s_2)\, \Omega_{n_0}(d_1,\ldots,d_r,s_1,s_2) = Q(d_1,\ldots,d_r,s_1,s_2)
\end{equation*}
in the power series ring $\coeffs\psl s_1,s_2,d_1,\ldots,d_r\psr$, where $P$ and $Q$ are polynomials, and where the total degree of $Q$ with respect to the $d_i$ is at most the total degree of $P$ with respect to the $d_i$. Moreover, we may clear the common irreducible polynomial factors of $P$ and $Q$ and assume that they are relatively prime. We claim that then, $P$ and $Q$ are constant in the $d_i$. Indeed, $P$ is a product of terms \eqref{eq:appearingtemrs} and terms not involving the $d_i$, so each irreducible factor of $P$ that involves the $d_i$ satisfies the assumption of Lemma \ref{lem : polydivisible} below. Then, by that lemma, each such factor must also divide $Q$ as a polynomial, contradicting our relatively-prime assumption. It follows that also $\Omega = Q/P$ is independent of the $d_i$, which is what we wanted to show.  
\end{proof}

\begin{lem}\label{lem : polydivisible}
Let $P,Q\in k[x_1,\ldots,x_n]$ be elements of a polynomial ring over the field $k$. Suppose that $P$ is irreducible and vanishes at the origin. Then if $P$ divides $Q$ in the power series ring $k\psl x_1,\ldots,x_n\psr$, it already divides $Q$ in $k[x_1,\ldots,x_n]$.
\end{lem}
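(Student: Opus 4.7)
The plan is to descend the divisibility statement from the power series ring to the localization of $R := k[x_1,\ldots,x_n]$ at the maximal ideal $\mathfrak{m} := (x_1,\ldots,x_n)$, and then to clear denominators using the irreducibility of $P$.

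First, I would use the fact that the completion map $R_{\mathfrak{m}} \to \widehat{R_{\mathfrak{m}}} = k\psl x_1,\ldots,x_n\psr$ is faithfully flat. A standard consequence (sometimes called the Krull intersection / faithful flatness lemma for ideals) is that for any ideal $I \subseteq R_{\mathfrak{m}}$ one has the equality $I \cdot \widehat{R_{\mathfrak{m}}} \cap R_{\mathfrak{m}} = I$. Applying this to the principal ideal $I = (P) \subseteq R_{\mathfrak{m}}$ and to the element $Q$ (which lies in $R \subseteq R_{\mathfrak{m}}$ and, by hypothesis, in $P \cdot \widehat{R_{\mathfrak{m}}}$), I obtain that $Q$ lies in the principal ideal $(P) \subseteq R_{\mathfrak{m}}$. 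In other words, there exist polynomials $A,B \in R$ with $B(0) \neq 0$ such that $Q = P \cdot (A/B)$ in $R_{\mathfrak{m}}$, i.e.\ $BQ = PA$ as an identity in $R$.

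Second, I would exploit the hypothesis that $P$ is irreducible and vanishes at the origin. Since $R$ is a unique factorization domain, $P$ is prime. The assumption $P(0) = 0$ places $P$ in $\mathfrak{m}$, while $B(0) \neq 0$ places $B$ outside $\mathfrak{m}$; in particular, $P$ cannot divide $B$ in $R$, since otherwise one would get $B(0) = 0$. Applying primality of $P$ to the identity $BQ = PA$ then forces $P \mid Q$ in $R$, which is the desired conclusion.

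I expect no serious obstacles. The only subtlety is invoking the correct form of faithful flatness (equivalently, Krull's intersection theorem for the ideal $(P)$ together with the $\mathfrak{m}$-adic completion being faithfully flat on finitely generated modules over the Noetherian local ring $R_{\mathfrak{m}}$); once this is in place, the remainder is a one-line UFD argument. A purely elementary alternative, which I might include for self-containedness, is to write $F = Q/P \in k\psl x_1,\ldots,x_n\psr$ and use the degree filtration: comparing homogeneous parts in $Q = P \cdot F$ and using that the lowest degree component of $P$ is nonzero, one can show inductively that only finitely many homogeneous components of $F$ are nonzero, so $F$ is already a polynomial. Either argument should suffice.
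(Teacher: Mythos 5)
Your main argument is correct and is essentially the paper's proof: both descend the divisibility from $k\psl x_1,\ldots,x_n\psr$ to the localization $k[x_1,\ldots,x_n]_{(x_1,\ldots,x_n)}$ via faithful flatness of completion, clear denominators to get an identity $BQ=PA$ with $B$ not vanishing at the origin, and conclude by primality of $P$ since $P$ lies in the maximal ideal while $B$ does not. One caution: the ``purely elementary alternative'' you sketch at the end is not sound as stated, because it uses only that the lowest-degree homogeneous part of $P$ is nonzero and never invokes irreducibility; for example $P=x-x^2$ divides $Q=x$ in $k\psl x\psr$ with quotient $F=1+x+x^2+\cdots$ not a polynomial, so no induction on homogeneous components can show $F$ is a polynomial without using that $P$ is irreducible. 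Stick with the faithful-flatness argument.
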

\begin{proof}
Let $I=(x_1,\ldots,x_n)$ be the ideal of the origin and consider the localization $k[x_1,\ldots,x_n]_I$. Since the completion $k[x_1,\ldots,x_n]_I\to k\psl x_1,\ldots,x_n\psr$ is faithfully flat, it follows that $P$ divides $Q$ in the power series ring if and only if it does so in the localization.\footnote{Note that the divisibility can be restated as the inclusion of ideals $(P)\subseteq (P,Q)$ being an isomorphism. By flatness the ideals play well with base change. By faithful flatness isomorphisms are preserved and reflected under base change.} In particular, there is some equality of the form $P\,R_1 = Q\, R_2$ in $k[x_1,\ldots,x_n]$, where $R_2\not\in I$. Since $P\in I$ by assumption, it cannot divide $R_2$, therefore $P$ divides $Q$ by unique factorization.   
\end{proof}

Using Proposition \ref{prop: Gamma's are constant}, one can already deduce Theorem \ref{thm: main theorem} for the Gieseker moduli spaces by taking appropriate limits in the $\varepsilon_i$-variables. We want to go further and prove the desired formula in Situation \ref{sit: framed main}. This will use the following result, which is nothing but a restatement of the rank $1$ case of Theorem \ref{thm: main theorem} for the framed moduli spaces.

\begin{prop}\label{prop: rank1blowup}
Let $\mathsf{W}(t_1,t_2,y,q)$ be the rank one (i.e. $r=1$) generating series for equivariant $\chi_y$-genera on the moduli spaces of framed sheaves on $\PP^2$. More precisely,
\[\mathsf{W}(t_1,t_2,y,q):= \sum_{Y}\prod_{s\in Y}\left( \Theta_y(\tT_1^{-l_{Y}(s)}\tT_2^{a_Y(s)+1})\, \Theta_y(\tT_1^{l_{Y}(s)+1}\tT_2^{-a_Y(s)})\right) q^{\abs{Y}},\]
where the sum ranges over all Young-diagrams $Y$.
Then 
\[\frac{W(t_1,t_2/t_1,y,q)\, W(t_1/t_2,t_2,y,q)}{W(t_1,t_2,y,q)}= \prod_{n=1}^{\infty}(1-(yq)^n)^{-1}.\]
\end{prop}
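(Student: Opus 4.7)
The plan is to identify each $W$ appearing in Proposition \ref{prop: rank1blowup} as a specialization of the $(q,t)$-Nekrasov--Okounkov formula of Rains--Warnaar \cite[Thm.~1.3]{Ra_Wa}, and then to deduce the claimed identity from the resulting closed-form infinite products.

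First I would make the summand of $W$ completely explicit. In the localization formalism, a $\torus$-equivariant line bundle $L$ with multiplicative character $\chi$ contributes $\Theta_y(L)/\Eu(L) = (1-y\chi^{-1})/(1-\chi^{-1})$, so substituting the $r=1$ tangent weights of Theorem \ref{thm: fixed point and tangent spaces} gives
\[
    W(t_1, t_2, y, q) = \sum_Y q^{|Y|}\prod_{s\in Y} \frac{\bigl(1 - y\,t_1^{\,l(s)} t_2^{-a(s)-1}\bigr)\bigl(1 - y\,t_1^{-l(s)-1} t_2^{\,a(s)}\bigr)}{\bigl(1 - t_1^{\,l(s)} t_2^{-a(s)-1}\bigr)\bigl(1 - t_1^{-l(s)-1} t_2^{\,a(s)}\bigr)}.
\]
After multiplying numerator and denominator of each hook factor by a suitable monomial in $t_1, t_2$ to clear the negative exponents, the summand acquires the standard Macdonald shape $\prod_s(1-A t^{l(s)}q^{a(s)+1})(1-B t^{l(s)+1}q^{a(s)})/[(1-t^{l(s)}q^{a(s)+1})(1-t^{l(s)+1}q^{a(s)})]$ that appears in the $(q,t)$-Nekrasov--Okounkov identity.

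Second, I would apply \cite[Thm.~1.3]{Ra_Wa} to each of the three series $W(t_1, t_2, y, q)$, $W(t_1, t_2/t_1, y, q)$, and $W(t_1/t_2, t_2, y, q)$, with the Rains--Warnaar parameters chosen so that the summand matches the one above in each specialization. Each invocation produces an expression as a finite collection of infinite $q$-Pochhammer factors in $y$, $q$ and in monomials in $t_1, t_2$. I would then take the ratio prescribed by the left-hand side of Proposition \ref{prop: rank1blowup} and verify that the factors depending on $t_1, t_2$ cancel pairwise between numerator and denominator, leaving only the factor $\prod_{n\geq 1}(1-(yq)^n)^{-1}$ that is independent of the torus parameters.

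The main obstacle is this last cancellation step: one must carefully bookkeep how the three specializations $(t_1,t_2)$, $(t_1,t_2/t_1)$, $(t_1/t_2,t_2)$ relate to the products of $q$-Pochhammer symbols, match the normalization conventions between the paper's $\chi_y$-setup and the Rains--Warnaar formulation, and check that every $t_i$-dependent factor is eliminated. The remarkable simplicity of the final right-hand side reflects the fact that these three substitutions are precisely the ones produced geometrically by the localization calculation on the blowup, and it is this geometric origin that makes the telescoping of the Rains--Warnaar products occur.
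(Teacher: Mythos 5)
Your proposal is correct and follows essentially the same route as the paper: the paper likewise rewrites the hook-product summand via a change of variables into the standard Macdonald shape, invokes the $(q,t)$-Nekrasov--Okounkov product formula of Rains--Warnaar for each of the three specializations, and verifies that all torus-parameter-dependent Pochhammer factors cancel in the ratio, leaving $\prod_{n\geq 1}(1-(yq)^n)^{-1}$.
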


\begin{proof}
Using the definition of $\Theta_y$, we find that

\[\mathsf{W}(t_1,t_2,y,q) = \sum_{Y}\left(\prod_{s\in Y}\frac{1-y \,t_1^{l_{Y}(s)}t_2^{-1-a_Y(s)}}{1-t_1^{l_{Y}(s)}t_2^{-1-a_Y(s)}} \frac{1-y\,t_1^{-1-l_{Y}(s)}t_2^{a_Y(s)}}{1-t_1^{-1-l_{Y}(s)}t_2^{a_Y(s)}}\right) q^{\abs{Y}}.\]
We define a function $\widetilde{\mathsf{W}}:=\widetilde{\mathsf{W}}(t,q,u,T)$ by renaming variables as follows:\footnote{Warning: The symbol $q$ plays different roles before and after the change of variables. The usage after the change of variables is in accordance with that from \cite{Ra_Wa}.}
\[
    t_1\mapsto t,\quad t_2\mapsto q^{-1},\quad y\mapsto u, \quad q\mapsto T/u.
\]
Then 
\[\widetilde{\mathsf{W}}(t,q,u,T) = \sum_{Y}\left(\prod_{s\in Y} \frac{(1-u\, t^{l_Y(s)}q^{1+a_Y(s)})\,(1-u^{-1}t^{1+l_Y(s)}q^{a_Y(s)})}{(1-t^{l_Y(s)}q^{1+a_Y(s)})\,(1-t^{1+l_Y(s)}q^{a_Y(s)})}  \right)T^{\abs{Y}}.\]

The desired formula now becomes 
\begin{equation}\label{eq:rank1lemma}\frac{\widetilde{\mathsf{W}}(tq,q,u,T)\,\widetilde{\mathsf{W}}(t,tq,u,T)}{\widetilde{\mathsf{W}}(t,q,u,T)} = \prod_{n=1}^{\infty} (1-T^n)^{-1}.\end{equation}
By the $(q, t)$-Nekrasov–Okounkov formula \cite[Thm. 1.3]{Ra_Wa}, we have the product expansion

\[\widetilde{\mathsf{W}}(t,q,u,T) = \prod_{i,j,k\geq 1}\frac{(1-uq^it^{j-1}T^k)(1-u^{-1}q^{i-1}t^j T^k)}{(1-q^{i-1}t^{j-1}T^k)(1-q^it^j T^k)}.\]
Expanding each occurrence of $\widetilde{\mathsf{W}}$ according to this formula and keeping track of all the factors proves \eqref{eq:rank1lemma}.
\end{proof}

\begin{proof}[{Proof of Theorem \ref{thm: main theorem} (the Main Theorem)}]
Proposition \ref{prop: Gamma's are constant} states that each element of the collection $\{\Omega_n\}$ is a power series only in $c_1(\tT_1)$ and $c_1(\tT_2)$ over $\coeffs$. In particular, each $\Omega_n(\mathcal{E})$ is pulled back from $A^*(\pt)$ and does not involve any of the equivariant parameters $e_i$.
Consequently, the main equation from Theorem \ref{thm: omega_j structure theorem} (the Weak Structure Theorem) implies that we have a relation of generating series
\begin{align}
    \label{eq: main theorem proof gen fun}
    \sum_n \int_{\Mhat(r-k\epsilon_0-n[\pt])}\Theta(\T) \, q^{2r n-k(r+k)} 
    =
    \Big(\sum_n \Omega_n\, q^{2rn}\Big)
    \cdot
    \sum _n\int_{M(r-n[\pt])}\Theta(\T) \, q^{2r n}.
\end{align}
We wish to show that $\sum_n \Omega_n q^n$ is equal to $\mathsf{Y}_k$ as defined in the statement of the Main Theorem. 
To do this, we will evaluate the equivariant integrals on both sides of equation \eqref{eq: main theorem proof gen fun} and specialize the equivariant parameters corresponding to the $\eT_i$.

The integrals arising in equation \eqref{eq: main theorem proof gen fun} are computed by summing over the fixed points of the associated moduli spaces. By Theorems \ref{thm: fixed point and tangent spaces} and \ref{thm: fixed point and tangent spaces blowup} the contribution from each fixed point is a product of terms of the form $\theta(e_{b}/e_{a}\,t_1^{i_1}t_{2}^{i_2})$, where
\[
    \theta(x)
    :=
    \frac{1-y\,x^{-1}}{1-x^{-1}},
\]
 with $e_i=\ch(\eT_i)$ and $t_i=\ch(\tT_i)$.
Now, taking limits in a fixed order, we observe
    \begin{align}\label{eq: e limits in order}
        \lim_{e_r\to 0}\cdots \lim_{e_{1}\to 0}
        \theta\big(e_{b}/e_{a}\,t_1^{i_1}t_{2}^{i_2}\big)~
        = 
        \begin{cases}
        	1, &\mbox{ if } a < b \\
        	y, &\mbox{ if } a > b \\
        	\dfrac{1-y\, t_1^{-i_1}t_2^{-i_2}}{1-t_1^{-i_1}t_2^{-i_2}}=\theta(t_1^{i_1}t_2^{i_2}),& \mbox{ if } a = b. 
        \end{cases}
    \end{align}

We now consider the generating series associated to the moduli spaces $\Mc$. By the definition of the integral notation from Notation \ref{not: integral notation} combined with Theorem \ref{thm: fixed point and tangent spaces} (i) we have:
\[
    \sum_n\int_{M(r-n[\pt])}\Theta(\T) \, q^{ 2rn}
    =
    \sum_{\bm{Y}} \Theta\big(\T|_{\bm{Y}}\big)
    \, q^{2r\sum_{i=1}^r \abs{Y_i}},
\]
where the sum ranges over $r$-tuples of Young-diagrams $\bm{Y} = (Y_1,\ldots,Y_r)$. Now, by Theorem \ref{thm: fixed point and tangent spaces} (ii), we can compute $\Theta\big(\T|_{\bm{Y}}\big)$ to be:
\begin{align*}
     \prod_{a,b = 1}^r
     \left(
        \prod_{s\in Y_{a}} \Theta\Big(\eT_{b}/\eT_{a} \tT_1^{-l_{Y_{b}}(s)}\tT_2^{a_{Y_{a}}(s)+1} \Big) 
        \times
        \prod_{t\in Y_{b}} \Theta\Big(\eT_{b}/\eT_{a} \tT_1^{l_{Y_{a}}(t) +1 }\tT_2^{-a_{Y_{b}}(t)} \Big) 
     \right). 
\end{align*}
Using equation \eqref{eq: e limits in order} gives that 
$\lim_{e_r\to 0}\cdots \lim_{e_{1}\to 0}\Theta\big(\T|_{\bm{Y}}\big)$ 
is equal to:
\begin{gather*}
    \prod_{1\leq b < a \leq r} 
    y^{\abs{Y_{a}}+\abs{Y_{b}}}
    ~\cdot~
    \prod_{a = 1}^r \prod_{s\in Y_{a}}
    \theta\Big(t_1^{-l_{Y_{a}}(s)}t_2^{a_{Y_{a}}(s)+1}\Big) \theta\Big(t_1^{l_{Y_{a}}(s)+1}t_2^{-a_{Y_{a}}(s)}\Big)\\
    = \prod_{a=1}^r \left(y^{(r-1)\abs{Y_a}} \prod_{s\in Y_a} \theta\Big(t_1^{-l_{Y_{a}}(s)}t_2^{a_{Y_{a}}(s)+1}\Big) \theta\Big(t_1^{l_{Y_{a}}(s)+1}t_2^{-a_{Y_{a}}(s)}\Big)\right).
\end{gather*}
We conclude that:
\begin{equation}
\label{eq:limitframedP2}
\begin{aligned}
    &\lim_{e_r\to 0}\cdots \lim_{e_{1}\to 0}~
    \sum_n\int_{M(r-n[\pt])}\Theta(T) \; q^{2rn}\\
    =& \left(
    \sum_{Y} \left( \prod_{s\in Y} \theta\Big(t_1^{-l_{Y}(s)}t_2^{a_{Y}(s)+1}\Big) \theta\Big(t_1^{l_{Y_{a}}(s)+1}t_2^{-a_{Y_{a}}(s)}\Big)\right)(q^{2r}y^{r-1})^{\abs{Y}}\right)^r\\
    =&\centering \left(\mathsf{W}(t_1,t_2,y,y^{r-1}q^{2r})\right)^r,
\end{aligned}
\end{equation}
where $\mathsf{W}$ is as in Proposition \ref{prop: rank1blowup}.

We now consider the generating series associated to the moduli spaces $\Mhatc$. By the definition of the integral notation from Notation \ref{not: integral notation} combined with Theorem \ref{thm: fixed point and tangent spaces blowup} (i) we have:
\begin{equation}    \label{eq: Mhat gen fun comp}
\begin{gathered}
    \sum_n \int_{\Mhat(r-n[\pt]-k\epsilon_0)}\Theta(\T) \; q^{2rn-k(r+k)}\\  
    =~
    \sum_{(\bm{Y},\bm{Z},\bm{k})} \Theta(\T|_{(\bm{Y},\bm{Z},\bm{k})}) q^{2r\sum_{i=1}^r(\abs{Y_i}+\abs{Z_i})+ \sum_{1\leq i<j\leq r} (k_i-k_j)^2}.
\end{gathered}\end{equation}
Consider a fixed triple $(\bm{Y},\bm{Z},\bm{k})$. In the notation of Theorem \ref{thm: fixed point and tangent spaces blowup} (ii), we have
\begin{align*}
    \lim_{e_r\to 0}\cdots \lim_{e_{1}\to 0}
	\Theta(\mathrm{L}_{a,b}(\tT_1,\tT_2)) 
	&= 
    {\begin{cases}
    	y^{\binom{k_{a}-k_{b}+1}{2}}  &, \mbox{ if } a > b \mbox{ and } k_{a}>k_{b}\\
    	y^{\binom{k_{b}-k_{a}}{2}}  &, \mbox{ if } a > b \mbox{ and } k_{a}+1<k_{b}\\
    	 1 &, \mbox{ else.}
    \end{cases}}\\
    &= {\begin{cases}
    y^{(k_{a}-k_{b})^2/2}y^{(k_{a}-k_{b})/2} &, \mbox{ if } a > b;\\
    1 &, \mbox{ else.}
    \end{cases}}
\end{align*}
In particular, we have the identity
\[
    \lim_{e_r\to 0}\cdots \lim_{e_{1}\to 0}
    \prod_{a,b 
    =1}^r\Theta(\mathrm{L}_{a,b}(\tT_1,\tT_2)) 
    ~=~ 
    y^{\sum_{a>b}(k_{a}-k_{b})^2/2}y^{\sum_{a>b}(k_{a}-k_{b})/2 }.
\]

Proceeding as before for $\N^{\bm{Y}}_{a,b}(-,-)$ gives that $\lim_{e_r\to 0}\cdots \lim_{e_{1}\to 0}\Theta(\T|_{(\bm{Y},\bm{Z},\bm{k})})$ is equal to
\begin{align*}
    &\prod_{a = 1}^r\left( 
    y^{\abs{Y_a}(r-1)}
    \prod_{s\in Y_{a}}
    \theta\Big(t_1^{-l_{Y_{a}}(s)}(t_2/t_1)^{a_{Y_{a}}(s)+1}\Big) \theta\Big(t_1^{l_{Y_{a}}(s)+1}(t_2/t_1)^{-a_{Y_{a}}(s)}\Big)\right)
    \\
    &~\cdot~
    \prod_{a=1}^r\left(  
    y^{\abs{Z_a}(r-1)}
    \prod_{s\in Z_{a}}
    \theta\Big((t_1/t_2)^{-l_{Z_{a}}(s)}t_2^{a_{Z_{a}}(s)+1}\Big) \theta\Big((t_1/t_2)^{l_{Z_{a}}(s)+1}t_2^{-a_{Z_{a}}(s)}\Big)\
    \right)
    \\
    &~\cdot~
    y^{\sum_{a>b}(k_{a}-k_{b})^2/2}y^{\sum_{a<b}(k_{a}-k_{b})/2 }.
\end{align*}
Now, considering the generating series on the left hand side of \eqref{eq: Mhat gen fun comp}, we see that we can separate the sums over pairs of $r$-tuples of Young diagrams into $2r$ sums over Young diagrams after taking limits in the $e_i$.  In conclusion, we find that the limiting series is equal to
\begin{align*}
    \lim_{e_r\to 0}\cdots \lim_{e_1\to 0} \sum_n \int_{\Mhat(r-n[\pt]-k\epsilon_0)}&\Theta(\T) \; q^{2rn-k(r+k)}
    \\
    =&~
    \Big( \mathsf{W}(t_1/t_2,t_2,y,y^{r-1}q^{2r})\cdot \mathsf{W}(t_1,t_2/t_1,y,y^{r-1}q^{2r})\Big)^r 
    \\
    &~\cdot~
    \sum_{k_1+\cdots+k_r = k} (q^2y)^{\sum_{ i<j} (k_i-k_j)^2/2 } \, y^{\sum_{ i<j} (k_i-k_j)/2 }.
\end{align*}
Comparing this to equation \eqref{eq:limitframedP2} and using Proposition \ref{prop: rank1blowup} gives the desired result.
\end{proof}

\bibliography{refs}
\bibliographystyle{amsalpha}
\end{document}